%
%
%
%
%

\RequirePackage{fix-cm}
\documentclass{svjour3} 
%
\smartqed  
\usepackage{graphicx}
\usepackage{amsmath,amssymb}
\setlength {\marginparwidth }{2cm} 
\usepackage{todonotes}

\usepackage{enumitem,hyperref}
\usepackage{cite}

\def\({\left(}
\def\){\right)}
\def\[{\left[}
\def\]{\right]}

\def\gph{\hbox{}}

\def\tto{\rightrightarrows}

\def\H{H\"{o}lder}

\def\hat{\widehat}

\def\R{\mathbb{R}}
\def\N{\mathbb{N}}

\def\gph{\operatorname{gph}}

\def\proj{\operatorname{proj}}


\renewcommand{\epsilon}{\varepsilon}

\renewcommand{\H}{\mathcal{H}}


\makeatletter
\def\namedlabel#1#2{\begingroup
    #2%
    \def\@currentlabel{#2}%
    \phantomsection\label{#1}\endgroup
}
\makeatother

%
%
%
%
%
\begin{document}
\title{Catching-up Algorithm with Approximate Projections for Moreau's Sweeping Processes \thanks{}
}

\titlerunning{Catching-up algorithm with errors}        

\author{Juan Guillermo Garrido and Emilio Vilches  
}


\institute{
\textbf{Juan Guillermo Garrido} \at 
           Departamento de Ingenier\'ia Matem\'atica, Universidad de Chile, Santiago, Chile.\email{jgarrido@dim.uchile.cl} \newline              
\textbf{Emilio Vilches} \at
           Instituto de Ciencias de la Ingenier\'ia, Universidad de O'Higgins, Rancagua, Chile.  
              \email{emilio.vilches@uoh.cl}               
}

\date{Received: date / Accepted: date}

\maketitle

\begin{abstract}
In this paper, we develop an enhanced version of the catching-up algorithm for sweeping processes through an appropriate concept of approximate projections. We establish some properties of this notion of approximate projection. Then, under suitable assumptions, we show the convergence of the enhanced catching-up algorithm for prox-regular, subsmooth, and merely closed sets. Finally, we discuss efficient numerical methods for obtaining approximate projections.
Our results recover classical existence results in the literature and provide new insight into the 
 numerical simulation of sweeping processes.

\keywords{Sweeping process \and  differential inclusions  \and normal cone \and approximate projections}
 \subclass{34A60 \and  49J52 \and 34G25 \and 49J53 \and 93D30}
\end{abstract}

\section{Introduction}
Given a Hilbert space $\H$, Moreau's sweeping process is a first-order differential inclusion involving the normal cone to a family of closed moving sets $(C(t))_{t\in [0,T]}$. In its simplest form, it can be written as
\begin{equation}\tag{SP}\label{sp1}
\left\{
\begin{aligned}
\dot{x}(t)&\in -N\left(C(t);x(t)\right) & \textrm{ a.e. } t\in [0,T],\\
x(0)&=x_0\in C(0),
\end{aligned}
\right.
\end{equation}
where $N(C(t);\cdot)$ denotes an appropriate normal cone to the sets $(C(t))_{t\in [0,T]}$.  Since its introduction by J.J. Moreau in \cite{MO1,MO2}, the sweeping process has allowed the development of various applications in contact mechanics, electrical circuits, and crowd motion, among others (see, e.g., \cite{Brogliato-M,Acary-Bon-Bro-2011,Maury-Venel}). Furthermore, so far, we have a well-consolidated existence theory for moving sets in the considerable class of prox-regular sets.  

The most prominent (and constructive) method for finding a solution to the sweeping process is the so-called \emph{catching-up algorithm}.   Developed by J.J. Moreau in \cite{MO2} for convex moving sets, it consists in taking a time discretization $\{t_k^n\}_{k=0}^n$ of the interval $[0,T]$ and defining a piecewise linear and continuous function $x_n\colon [0,T]\to \H$ with nodes
 $$
 x_{k+1}^n := \proj_{C(t_{k+1}^n)}(x_k^n) \textrm{ for all } k\in \{0,\ldots,n-1\}.
 $$
Moreover, under general assumptions, it could be proved that the sequence $(x_n)_n$ converges to the unique solution of \eqref{sp1} (see, e.g., \cite{MR2159846}).

The applicability, from the numerical point of view, of the catching-up algorithm is based on the possibility of calculating an exact formula for the projection to the moving sets.  However, for the majority of sets, the projection onto a closed set is impossible to obtain exactly, and only numerical approximations of it are possible. Since there are still no guarantees on the convergence of the catching-up algorithm with approximate projections, in this paper, we develop a theoretical framework for the numerical approximation of the solutions of the sweeping process using an appropriate concept of approximate projection that is consistent with the numerical methods for the computation of the projection on a closed set.

Regarding numerical approximations of sweeping processes, we are aware of the paper \cite{MR2800713}, where the author proposes an implementable numerical method for the particular case of the intersection of the complement of convex sets, which is used to study crowd motion. Our approach follows a different path and is based on numerical optimization methods to find an approximate projection in the following sense: given a closed set $C\subset \H$,  $\epsilon>0$ and $x\in \H$, we say that $\bar{x}\in C$ is an \emph{approximate projection} of $C$ at $x\in \H$ if 
$$
\Vert x-\bar{x}\Vert^2<\inf_{y\in C}\Vert x-y\Vert^2+\varepsilon.
$$
We observe that the set of approximate projections is always nonempty and can be obtained through numerical methods for optimization.  Hence, in this paper, we study the properties of approximate projections and propose a general numerical method for the sweeping process based on approximate projections. We prove that this algorithm converges in three general cases: (i) prox-regular moving sets (without compactness assumptions), (ii) ball-compact subsmooth moving sets, and (iii) general ball-compact fixed closed sets. Hence, our results cover a wide range of existence results for the sweeping process.   It is worth emphasizing that our method generalizes the catching-up algorithm and provides important insights into the numerical simulation of sweeping processes. 

The paper is organized as follows. Section 2 provides the mathematical tools needed for the presentation of the paper and also develops the theoretical properties of approximate projections. Section 3 is devoted to presenting the proposed algorithm and its main properties. Then, in Section 4, we prove the convergence of the algorithm when the moving set has uniformly prox-regular values (without compactness assumptions). Next, in Section 5, we provide the convergence of the proposed algorithm for ball-compact subsmooth moving sets. Section  6 shows the convergence for a fixed ball-compact set. Finally, in Section 7, we discuss numerical aspects for obtaining approximate projections. The paper ends with concluding remarks. 

\section{Preliminaries}
From now on $\H$ stands for a Hilbert space, whose norm, denoted by $\Vert \cdot \Vert$, is induced by an inner product $\langle\cdot,\cdot\rangle$. The closed (resp. open) ball centered at $x$ with radius $r>0$ is denoted by $\mathbb{B}[x,r]$ (resp. $\mathbb{B}(x,r)$), and the closed unit ball is denoted by $\mathbb{B}$. The notation $\H_w$ stands for $\H$ equipped with the weak topology and $x_n \rightharpoonup x$ denotes the weak convergence of a sequence $(x_n)_n$ to $x$. For a given set $S\subset \H$, the \emph{support} and the \emph{distance function} of $S$ of at $x\in \H$ are defined, respectively, as
$$
\sigma(x,S):= \sup_{z\in S}\langle x,z\rangle \textrm{ and } d_S(x):=\inf_{z\in S}\|x-z\|.
$$
Given $\rho\in ]0,+\infty]$ and $\gamma<1$ positive, the   \emph{$\rho$-enlargement} and the 
 \emph{$\gamma\rho$-enlargement} of $S$ are defined, respectively, as
$$
U_\rho(S) = \{x\in \H:d_S(x)<\rho\} \textrm{ and } U_\rho^\gamma(S):=\{x\in \H:d_S(x)<\gamma \rho\}.
$$
Given $A,B\subset \H$ two sets, we define the \emph{excess} of $A$ over $B$ as the quantity $e(A,B) := \sup_{x\in A} d_B(x)$. From this, we define the \emph{Hausdorff distance} between $A$ and $B$ as
$$d_H(A,B) := \max\{e(A,B),e(B,A)\}.$$
Further properties about Hausdorff distance can be found in \cite[Sec. 3.16]{MR2378491}.\\
\noindent A vector $h\in \H$ belongs to the Clarke tangent cone $T(S;x)$ (see \cite{Clarke1983}); when for every sequence $(x_n)_n$ in $S$ converging to $x$ and every sequence of positive numbers $(t_n)_n$ converging to $0$, there exists a sequence $(h_n)_n$ in $\H$ converging to $h$ such that $x_n+t_nh_n\in S$ for all $n\in \mathbb{N}$. This cone is closed and convex, and its negative polar $N(S;x)$ is the Clarke normal cone to $S$ at $x\in S$, that is,
\begin{equation*}
N\left(S;x\right):=\left\{v\in \H : \left\langle v,h\right\rangle \leq 0 \quad  \textrm{for all } h\in T(S;x)\right\}.
\end{equation*}
As usual, $N(S;x)=\emptyset$ if $x\notin S$. Through that normal cone, the \emph{Clarke subdifferential} of a function $f\colon \H\to \mathbb{R}\cup\{+\infty\}$ is defined by
\begin{equation*}
\partial f(x):=\left\{v\in \H : (v,-1)\in N\left(\operatorname{epi}f,(x,f(x))\right)\right\},
\end{equation*}
where $\operatorname{epi}f:=\left\{(y,r)\in \H\times \mathbb{R} : f(y)\leq r\right\}$ is the epigraph of $f$. When the function $f$ is finite and locally Lipschitzian around $x$, the Clarke subdifferential is characterized (see \cite{Clarke1998}) in the following simple and amenable way
\begin{equation*}
\partial f(x)=\left\{v\in \H : \left\langle v,h\right\rangle \leq f^{\circ}(x;h) \textrm{ for all } h\in \H\right\},
\end{equation*}
where
\begin{equation*}
f^{\circ}(x;h):=\limsup_{(t,y)\to (0^+,x)}t^{-1}\left[f(y+th)-f(y)\right],
\end{equation*}
is the \emph{generalized directional derivative} of the locally Lipschitzian function $f$ at $x$ in the direction $h\in \H$.  The function $f^{\circ}(x;\cdot)$ is in fact the support of $\partial f(x)$. That characterization easily yields that the Clarke subdifferential of any locally Lipschitzian function is a set-valued map with nonempty and convex values satisfying the important property of upper semicontinuity from $\H$ into $\H_w$.\\

\noindent Let $f\colon \H\to \mathbb{R}\cup\{+\infty\}$ be an lsc ({\it lower semicontinuous}) function and $x\in \operatorname{dom}f$. We say that 
\begin{itemize}
\item[(i)] An element $\zeta$ belongs to the \emph{proximal subdifferential}   of $f$ at $x$, denoted by $\partial_P f(x)$, if there exist two nonnegative numbers $\sigma$ and $\eta$ such that
\begin{equation*}
f(y)\geq f(x)+\left\langle \zeta,y-x\right\rangle -\sigma\Vert y-x\Vert^2 \textrm{ for all } y\in \mathbb{B}(x;\eta).
\end{equation*}
\item[(ii)] An element $\zeta\in\H$ belongs to the \emph{Fr\'echet subdifferential} of $f$ at $x$, denoted by $\partial_F f(x)$, if $$\liminf_{h\to 0}\frac{f(x+h)-f(x)-\langle \zeta,h\rangle}{\|h\|}\geq 0.$$ 
\item[(iii)] An element $\zeta$ belongs to the \emph{limiting subdifferential} of $f$ at $x$, denoted by $\partial_L f(x)$,  if there exist sequences $(\zeta_n)$  and $(x_n)$ such that $\zeta_n\in\partial_P f(x_n)$ for all $n\in \mathbb{N}$ and $x_n\to x$,  $\zeta_n \rightharpoonup\zeta$, and $f(x_n)\to f(x)$.

\end{itemize}
Through these concepts, we can define the proximal, Fr\'echet, and limiting normal cone of a given set $S\subset \H$ at $x\in S$, respectively, as
$$
N^P\left(S;x\right):=\partial_P I_S(x),\,  N^F(C;x) := \partial_F I_C(x)  \textrm{ and } N^L(S;x) = \partial_LI_S(x),
$$
where $I_S$ is the indicator function of $S\subset \H$ (recall that $I_S(x)=0$ if $x\in S$ and $I_S(x)=+\infty$ if $x\notin S$). It is well-known that 
\begin{equation}\label{normal-distancia}
N^{P}(S;x)\cap \mathbb{B}=\partial_P d_S(x) \quad \textrm{ for all } x\in S.
\end{equation}

\noindent The equality (see \cite{Clarke1998})
\begin{equation*}
\begin{aligned}
N\left(S;x\right)&= \overline{\text{co}}^\ast N^L(S;x)=\operatorname{cl}^*\left(\mathbb{R}_+\partial d_S(x)\right) & \textrm{ for } x\in S,
\end{aligned}
\end{equation*}
gives an expression of the Clarke normal cone in terms of the distance function. 

Now, we recall the concept of uniformly prox-regular sets. Introduced by Federer in \cite{MR110078} and later developed by Rockafellar, Poliquin, and Thibault in \cite{MR1694378}.  The prox-regularity generalizes and unifies convexity and nonconvex bodies with $C^2$ boundary. We refer to \cite{MR2768810,Thibault-2023-II} for a survey.

\begin{definition}
    Let $S$ be a closed subset of $\H$ and $\rho\in ]0,+\infty]$. The set $S$ is called $\rho$-uniformly prox-regular if for all $x\in S$ and $\zeta\in N^P(S;x)$ one has
    \begin{equation*}
        \langle \zeta,x'-x\rangle\leq \frac{\|\zeta\|}{2\rho}\|x'-x\|^2 \textrm{ for all } x'\in S.
    \end{equation*}
\end{definition}
It is important to emphasize that convex sets are $\rho$-uniformly prox-regular for any $\rho>0$. The following proposition provides a characterization of uniformly prox-regular sets (see, e.g.,  \cite{MR2768810}). 
\begin{proposition}\label{prox_reg_prop}
    Let $S\subset \H$ be a closed set and $\rho\in ]0,+\infty]$. The following assertions are equivalent:
    \begin{enumerate}
        \item [(a)] $S$ is $\rho$-uniformly prox-regular.
        \item [(b)] For any positive $\gamma<1$ the mapping $\proj_S$ is well-defined on $U_\rho^\gamma(S)$ and Lipschitz continuous on $U_\rho^\gamma(S)$ with $(1-\gamma)^{-1}$ as a Lipschitz constant, i.e.,
$$\left\|\proj_S\left(u_1\right)-\proj_S\left(u_2\right)\right\| \leq(1-\gamma)^{-1}\left\|u_1-u_2\right\| \quad$$ for all $u_1, u_2 \in U_\rho^\gamma(S)$.
        \item[(c)] For any $x_i \in S, v_i \in N^P\left(S ; x_i\right) \cap \mathbb{B}$, with $i=1,2$,  one has
$$
\left\langle v_1-v_2, x_1-x_2\right\rangle \geq-\frac{1}{\rho}\left\|x_1-x_2\right\|^2,
$$
that is, the set-valued mapping $N^P(S ; \cdot) \cap \mathbb{B}$ is $1 / \rho$-hypomonotone.
\item [(d)] For all $\gamma\in ]0,1[$, for all $x_1,x_2\in U_\rho^\gamma(S)$, for all $\xi\in \partial_P d_S(x)$, one has

    \end{enumerate}
$$\langle\xi,x_2-x_1\rangle\leq \frac{1}{2\rho(1-\gamma)^2}\|x_1-x_2\|^2 + d_S(x_2)-d_S(x_1).$$
\end{proposition}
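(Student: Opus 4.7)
The plan is to establish the chain of implications $(a) \Longleftrightarrow (c) \Longleftrightarrow (b) \Longleftrightarrow (d)$, exploiting in each step the variational interpretation of the proximal normal cone. Since the scheme is classical, I would organize the argument around two hinges: the hypomonotonicity formulation $(c)$, which is the cleanest consequence of the defining inequality $(a)$, and the projection formulation $(b)$, from which $(d)$ follows by differentiating the squared distance.

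\medskip

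\textbf{Step 1: $(a) \Longleftrightarrow (c)$.} For $(a)\Rightarrow(c)$, given $x_i\in S$ and $v_i\in N^P(S;x_i)\cap \mathbb B$, I would apply the prox-regular inequality at $(x_1,v_1)$ with test point $x_2$, and at $(x_2,v_2)$ with test point $x_1$, then add the two estimates; the bound $\|v_i\|\le 1$ turns the right-hand side into $\frac{1}{\rho}\|x_1-x_2\|^2$. For the converse, I would take $\zeta\in N^P(S;x)$, use that $0\in N^P(S;x')$ for every $x'\in S$, and apply $(c)$ to the pair $(x,\zeta/\|\zeta\|)$, $(x',0)$; after rescaling by $\|\zeta\|$ this recovers the defining inequality (up to a multiplicative constant whose normalization is a routine adjustment using density of points with normals of norm less than one).

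\medskip

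\textbf{Step 2: $(a)\Rightarrow(b)$.} This is the analytical core. For fixed $\gamma\in(0,1)$ and $u\in U_\rho^\gamma(S)$, I would first establish existence of $\proj_S(u)$: the sequence of minimizers of $\|u-\cdot\|$ over $S$ is bounded, and even without compactness one obtains a proximal normal of the form $\zeta=u-\bar x$ at any limiting candidate by the standard variational argument. Uniqueness (and Lipschitz estimate) would follow by writing, for two points $u_1,u_2\in U_\rho^\gamma(S)$ with projections $\bar x_1,\bar x_2$, the inclusion $u_i-\bar x_i\in N^P(S;\bar x_i)$ and using $(c)$ (already equivalent to $(a)$): this yields
\[
\langle (u_1-\bar x_1)-(u_2-\bar x_2),\bar x_1-\bar x_2\rangle \ge -\tfrac{\max\{\|u_1-\bar x_1\|,\|u_2-\bar x_2\|\}}{\rho}\|\bar x_1-\bar x_2\|^2,
\]
and after rearranging and using $\|u_i-\bar x_i\|=d_S(u_i)<\gamma\rho$ one obtains $\|\bar x_1-\bar x_2\|\le (1-\gamma)^{-1}\|u_1-u_2\|$. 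The converse $(b)\Rightarrow(a)$ follows by taking $x\in S$, $\zeta\in N^P(S;x)\cap\mathbb B$, and using that for small $t>0$ the point $x+t\zeta$ lies in $U_\rho^\gamma(S)$ with $\proj_S(x+t\zeta)=x$, so the Lipschitz estimate transfers to the hypomonotonicity and hence to $(a)$.

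\medskip

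\textbf{Step 3: $(b)\Longleftrightarrow(d)$.} The relation \eqref{normal-distancia} together with the single-valuedness of $\proj_S$ on $U_\rho^\gamma(S)$ identifies $\partial_P d_S(u)=\{(u-\proj_S(u))/d_S(u)\}$ for $u\notin S$, while for $u\in S$ one has $\partial_P d_S(u)=N^P(S;u)\cap\mathbb B$. Substituting this into the hypomonotonicity inequality from $(c)$ between proximal subgradients at $x_1,x_2\in U_\rho^\gamma(S)$ and rearranging via the elementary identity $\|u-\proj_S(u)\|=d_S(u)$ produces the inequality in $(d)$; conversely, $(d)$ applied on $S$ (where $d_S$ vanishes) recovers $(a)$.

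\medskip

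The step I expect to be the main obstacle is the Lipschitz estimate in Step 2: establishing existence of the projection without compactness requires showing that the minimizing sequence actually converges (not just weakly), which one gets by squeezing the proximal inequality derived from $(a)$ against the defining minimization; and the constant $(1-\gamma)^{-1}$ requires a careful bookkeeping of the norms $\|u_i-\bar x_i\|<\gamma\rho$ when turning $(c)$ into the contraction estimate. The remaining implications are variational manipulations once the projection is known to exist and be single-valued on $U_\rho^\gamma(S)$.
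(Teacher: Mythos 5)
First, note that the paper does not prove this proposition at all: it is stated as a known characterization of uniform prox-regularity with a citation to the survey literature, so there is no in-paper argument to compare against. Judged on its own terms, your proposal has the standard architecture and the Lipschitz computation in Step 2 (hypomonotonicity applied to the proximal normals $u_i-\bar x_i$, then $r/\rho<\gamma$ and Cauchy--Schwarz) is correct, but three steps contain genuine gaps. (i) Your direct argument for $(c)\Rightarrow(a)$ yields $\langle\zeta,x'-x\rangle\le\frac{\|\zeta\|}{\rho}\|x'-x\|^2$, which is off by a factor of $2$ from the defining inequality, i.e.\ it only proves $\rho/2$-uniform prox-regularity; no ``density of normals of norm less than one'' repairs this, because the discrepancy is in the constant, not in which normals are tested. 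The sharp constant $\frac{1}{2\rho}$ is only recovered through the projection: one must show $\proj_S(x+t\zeta)=x$ for a unit proximal normal $\zeta$ and \emph{all} $t<\rho$, and then expand $\|x+t\zeta-x'\|^2\ge t^2$ and let $t\uparrow\rho$. (ii) The same issue infects your $(b)\Rightarrow(a)$: ``for small $t>0$'' only gives a local inequality with an uncontrolled constant; you need the continuation argument (the set of $t\in[0,\gamma\rho)$ with $\proj_S(x+t\zeta)=x$ is nonempty, open and closed) to push $t$ up to $\rho$. So the equivalence should be closed as $(a)\Rightarrow(c)\Rightarrow(b)\Rightarrow(a)$, with the factor-of-two-free passage happening only in the last arrow.

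(iii) Step 3 is the weakest point: assertion $(d)$ is a one-sided subgradient inequality involving the function-value difference $d_S(x_2)-d_S(x_1)$, not a symmetric two-point monotonicity statement, so it cannot be obtained by ``substituting into the hypomonotonicity inequality.'' The correct derivation decomposes, for $\xi=(x_1-p_1)/d_S(x_1)$ with $p_i=\proj_S(x_i)$,
\[
\langle\xi,x_2-x_1\rangle=\langle\xi,x_2-p_2\rangle+\langle\xi,p_2-p_1\rangle+\langle\xi,p_1-x_1\rangle
\le d_S(x_2)+\tfrac{1}{2\rho}\|p_1-p_2\|^2-d_S(x_1),
\]
using $\|\xi\|=1$ and the prox-regularity inequality at $p_1$; the factor $(1-\gamma)^{-2}$ in $(d)$ then comes precisely from the Lipschitz bound $\|p_1-p_2\|\le(1-\gamma)^{-1}\|x_1-x_2\|$ of assertion $(b)$. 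Your sketch never explains where $(1-\gamma)^{-2}$ comes from, which is the tell that the proposed mechanism is not the right one. Finally, the existence of $\proj_S$ on $U_\rho^\gamma(S)$ in infinite dimensions, which you correctly flag as delicate, does require the Cauchy-sequence argument (approximate minimizers carry approximate proximal normals via Ekeland/Borwein--Preiss, and hypomonotonicity forces them to cluster); as written this is only gestured at.
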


Next, we recall the class of subsmooth sets that includes the concepts of convex and uniformly prox-regular sets (see  \cite{MR2115366} and also \cite[Chapter~8]{Thibault-2023-II} for a survey).
\begin{definition}
    Let $S$ be a closed subset of $\H$. We say that $S$ is \textit{subsmooth} at $x_0 \in S$, if for every $\varepsilon>0$ there exists $\delta>0$ such that
\begin{equation}\label{eqn_subsmooth}
    \left\langle\xi_2-\xi_1, x_2-x_1\right\rangle \geq-\varepsilon\left\|x_2-x_1\right\|,
\end{equation}
whenever $x_1, x_2 \in \mathbb{B}\left[x_0, \delta\right] \cap S$ and $\xi_i \in N\left(S ; x_i\right) \cap \mathbb{B}$ for $i\in \{1,2\}$. The set $S$ is said \emph{subsmooth} if it is subsmooth at each point of $S$. We further say that $S$ is \textit{uniformly subsmooth}, if for every $\varepsilon>0$ there exists $\delta>0$, such that \eqref{eqn_subsmooth} holds for all $x_1, x_2 \in S$ satisfying $\left\|x_1-x_2\right\|\leq \delta$ and all $\xi_i \in N\left(S ; x_i\right) \cap \mathbb{B}$ for $i\in \{1,2\}$.\\
Let $(S(t))_{t\in I}$ be a family of closed sets of $\H$ indexed by a nonemptyset $I$. The family is called \textit{equi-uniformly subsmooth}, if for all $\epsilon>0$, there exists $\delta>0$ such that for all $t\in I$, the inequality \eqref{eqn_subsmooth} holds for all $x_1,x_2\in S(t)$ satisfying $\|x_1-x_2\|\leq \delta$ and all $\xi_i\in N(S(t);x_i)\cap \mathbb{B}$ with $i\in\{1,2\}$. 
\end{definition}

Given an interval $\mathcal{I}$, a set-valued map $F\colon \mathcal{I}\tto \H$ is said to be measurable if for all open set $U$ of $\H$, the inverse image $F^{-1}(U) = \{t\in\mathcal{I}:F(t)\cap U\neq\emptyset\}$ is a Lebesgue measurable set. When $F$ takes nonempty and closed values and $\H$ is separable, this notion is equivalent to the $\mathcal{L}\otimes \mathcal{B}(\H)$-measurability of the graph  $\gph F := \{(t,x)\in \mathcal{I}\times \H: x\in F(t)\}$  (see, e.g., \cite[Theorem 6.2.20]{MR2527754}). 

Given a set-valued map $F\colon \H\tto \H$, we say $F$ is upper semicontinuous from $\H$ into $\H_w$ if for all weakly closed set $C$ of $\H$, the inverse image $F^{-1}(C)$ is a closed set of $\H$. It is known (see, e.g., see \cite[Proposition 6.1.15 (c)]{MR2527754}) that if $F$ is  upper semicontinuous, then the map $x\mapsto \sigma(\xi,F(x))$ is upper semicontinuous for all $\xi\in \H$. When $F$ takes convex and weakly compact values, these two properties are equivalent (see \cite[Proposition 6.1.17]{MR2527754}).\\
\noindent A set $S\subset \H$ is said ball compact if the set $S\cap r\mathbb{B}$ is compact for all $r>0$. 
The \emph{projection} onto $S\subset \H$ is the (possibly empty) set  
$$\operatorname{Proj}_{S}(x):=\left\{z\in S : d_{S}(x)=\Vert x-z\Vert\right\}.$$ 
When the projection set is a singleton, we denote it as $\operatorname{proj}_{S}(x)$. 
 For $\varepsilon>0$, we define the set of \emph{approximate projections}:
\begin{equation*}
\proj_{S}^{\varepsilon}(x):=\left\{z\in S : \Vert x-z\Vert^2 < d_S^2(x)+\varepsilon  \right\}.
\end{equation*}
By definition, the above set is nonempty and open. Moreover, it satisfies similar properties as the projection map (see Proposition \ref{prop-epsilon} below). The approximate projections have been considered several times in variational analysis. In particular, they were used to characterize the subdifferential of the Asplund function of a given set. Indeed, let $S\subset \H$ and consider the Asplund function of the set $S$ 
$$\varphi_S(x) := \frac{1}{2}\|x\|^2-\frac{1}{2}d_S^2(x) \quad x\in \H.
$$ 
Then, the following formula holds (see, e.g., \cite[p.~467]{MR2848527}): 
\begin{equation*}
    \partial \varphi_S(x) = \bigcap_{\epsilon>0}\overline{\text{co}}(\proj_S^\epsilon(x)).
\end{equation*}
We recall that for any set $S\subset \H$ and $x\in \H$, where $\operatorname{Proj}_S(x)\neq \emptyset$,  the following formula holds: 
$$
x-z\in d_S(x)\partial_P d_S(z) \textrm{ for all } z\in \operatorname{Proj}_S(x).
$$
The next result provides an approximate version of the above formula for any closed set $S\subset \H$. 
\begin{lemma}\label{proximal-lemma}
 Let $S\subset \H$ be a closed set, $x\in \H$, and $\varepsilon>0$. For each  $z\in \operatorname{proj}_{S}^{\varepsilon}(x)$ there is $v\in  \operatorname{proj}_{S}^{\varepsilon}(x)$ such that  $\Vert z-v\Vert < 2\sqrt{\varepsilon}$ and 
    $$x-z\in (4\sqrt{\epsilon} + d_S(x))\partial_P d_S(v) + 3\sqrt{\epsilon}\mathbb{B}.$$
\end{lemma}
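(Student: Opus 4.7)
My plan is to apply the Borwein--Preiss smooth variational principle (with exponent $p=2$) to the lower semicontinuous bounded-below function $f(y):=\|x-y\|^2+I_S(y)$, which satisfies $\inf f=d_S^2(x)$ and $f(z)<d_S^2(x)+\varepsilon$. With parameter $\lambda:=2\sqrt{\varepsilon}$, the principle produces a point $v\in\H$ and a sequence $(v_n)_n\to v$ with $v_0=z$, $\|v_n-z\|\leq\lambda$, such that $v$ minimizes $f(y)+\frac{\varepsilon}{\lambda^2}\sum_n\mu_n\|y-v_n\|^2$ for suitable weights $\mu_n\geq 0$ with $\sum_n\mu_n=1$. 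Because $\H$ is Hilbert, this weighted sum of squared distances equals $\|y-w\|^2$ (up to a constant in $y$), where $w:=\sum_n\mu_n v_n$; hence $v$ is an exact minimizer of $y\mapsto\|x-y\|^2+c\|y-w\|^2+I_S(y)$ with $c:=\varepsilon/\lambda^2=1/4$, and the barycentric construction of $w$ yields $\|v-z\|,\|w-z\|\leq 2\sqrt{\varepsilon}$. The inequality $f(v)\leq f(z)$ then places $v$ in $\proj_S^{\varepsilon}(x)$; the strict bound $\|v-z\|<2\sqrt{\varepsilon}$ can be recovered by restarting the argument with $\tilde\varepsilon:=\|x-z\|^2-d_S^2(x)<\varepsilon$ in place of $\varepsilon$.

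Next, I would expand the optimality inequality $\|x-y\|^2+c\|y-w\|^2\geq\|x-v\|^2+c\|v-w\|^2$ for $y\in S$ and collect cross terms, obtaining
\[
\langle\zeta,y-v\rangle\leq \tfrac{1+c}{2}\|y-v\|^2\quad\text{for all } y\in S,\qquad \zeta:=(x-v)+c(w-v),
\]
which is exactly the proximal normal cone inequality, so $\zeta\in N^P(S;v)$. A triangle-inequality bound, together with $\|x-v\|\leq\sqrt{d_S^2(x)+\varepsilon}\leq d_S(x)+\sqrt{\varepsilon}$ and $c\|v-w\|\leq(1/4)(4\sqrt{\varepsilon})=\sqrt{\varepsilon}$, produces $\|\zeta\|\leq d_S(x)+2\sqrt{\varepsilon}$.

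To finish, I would invoke the identity $N^P(S;v)\cap\mathbb{B}=\partial_P d_S(v)$ from \eqref{normal-distancia} and the radiality of $\partial_P d_S(v)$ (being the intersection of a cone with $\mathbb{B}$) to rewrite $\zeta$ as an element of $(d_S(x)+4\sqrt{\varepsilon})\partial_P d_S(v)$. Assembling $x-z=(x-v)+(v-z)=\zeta+c(v-w)+(v-z)$ and estimating the tail by $\|c(v-w)+(v-z)\|\leq\sqrt{\varepsilon}+2\sqrt{\varepsilon}=3\sqrt{\varepsilon}$ then delivers the claimed inclusion. The main obstacle is the choice of variational principle: a naive use of Ekeland's theorem produces only a \emph{linear} perturbation $\frac{\sqrt{\varepsilon}}{2}\|\cdot-v\|$, which leaves a residual first-order term $\sqrt{\varepsilon}\|y-v\|$ in the optimality inequality that cannot be absorbed into a quadratic growth bound and therefore fails to yield a proximal normal at $v$; the $p=2$ Borwein--Preiss principle, available in Hilbert spaces, supplies precisely the quadratic perturbation that is compatible with the quadratic characterization of $N^P(S;v)$.
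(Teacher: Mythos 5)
Your proof is correct and follows essentially the same route as the paper: both apply the Borwein--Preiss variational principle to $y\mapsto\|x-y\|^2+I_S(y)$ to produce the nearby point $v$, extract a proximal normal at $v$, bound its norm by $d_S(x)+O(\sqrt{\varepsilon})$, and convert via $N^P(S;v)\cap\mathbb{B}=\partial_P d_S(v)$. The only difference is presentational: the paper invokes the subdifferential form of the principle ($0\in\partial_P g(v)+2\sqrt{\varepsilon}\mathbb{B}$) together with the proximal sum rule, whereas you work out the $p=2$ quadratic perturbation explicitly.
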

\begin{proof}\quad Fix $\varepsilon>0$, $x\in \H$ and $z\in \proj_{S}^{\varepsilon}(x)$.  According to the Borwein-Preiss Variational Principle \cite[Theorem~2.6]{MR902782} applied to $y\mapsto g(y):=\Vert x-y\Vert^2+ I_S(y)$, there exists $v\in \operatorname{proj}_{S}^{\varepsilon}(x)$ such that $\Vert z-v\Vert<2\sqrt{\varepsilon}$ and $0\in \partial_P g(v) + 2\sqrt{\varepsilon}\mathbb{B}$. Then, by the sum rule for the proximal subdifferential (see, e.g., \cite[Proposition 2.11]{Clarke1998}), we obtain that
$$
x-v\in N^P(S;v)+\sqrt{\varepsilon}\mathbb{B},
$$
which implies that $x-z\in N^P(S;v)+3\sqrt{\varepsilon}\mathbb{B}.
$ 
Next, since $\|x-z\|\leq d_S(x) + \sqrt{\epsilon}$, we obtain that 
$$x-z\in N^P(S;v)\cap (4\sqrt{\epsilon} + d_S(x))\mathbb{B} + 3\sqrt{\epsilon}\mathbb{B}.$$ Finally, the result follows from formula \eqref{normal-distancia} and the above inclusion.
\end{proof}

The following proposition displays some properties of approximation projections for uniformly prox-regular sets.  
\begin{proposition}\label{prop-epsilon}
    Let $S\subset\H$ be a $\rho$-uniformly prox-regular set. Then, one has:
    \begin{enumerate}
        \item [(a)] Let $(x_n)_n$ be a sequence converging to $x\in U_\rho(S)$. Then for any $(z_n)_n$ and any sequence of positive numbers  $(\epsilon_n)_n$ converging to $0$ with $z_n\in \proj_{S}^{\epsilon_n}(x_n)$ for all $n\in\N$, we have that $z_n\to \proj_{S}(x)$.
        \item [(b)] Let $\gamma\in ]0,1[$ and $\epsilon\in ]0,\epsilon_0]$ where $\epsilon_0$ is such that $$\gamma +4\sqrt{\epsilon_0}\left(1+\gamma+\frac{1}{\rho}(1+4\sqrt{\epsilon_0})\right)=1.$$ Then, for all $z_i\in\proj_S^\epsilon(x_i)$ with  $x_i\in U_\rho^\gamma(S)$ for $i\in\{1,2\}$, we have         \begin{equation*}
            (1-\digamma)\|z_1-z_2\|^2\leq \sqrt{\epsilon}\|x_1-x_2\|^2 + M\sqrt{\epsilon}+\langle x_1-x_2,z_1-z_2\rangle,
        \end{equation*}
        where $\digamma := \frac{\alpha}{\rho} + 4\sqrt{\epsilon}\left(1+\frac{\alpha}{\rho}+\frac{1}{\rho}(1+\sqrt{\epsilon})\right)$ with $\alpha := \max\{d_S(x_1),d_S(x_2)\}$ and $M$ is a nonnegative constant only dependent on $\epsilon, \rho,\gamma$.
    \end{enumerate}
\end{proposition}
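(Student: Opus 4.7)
The plan is to treat part (b) as the main technical step and then deduce part (a) from it. For part (b), the strategy is to apply Lemma \ref{proximal-lemma} to each pair $(x_i,z_i)$, $i\in\{1,2\}$, to replace the approximate projections by ``nearly exact'' proximal data. This yields companion points $v_i\in\proj_S^{\epsilon}(x_i)$ with $\|z_i-v_i\|<2\sqrt{\epsilon}$ and a decomposition
$$x_i-z_i=\lambda_i\xi_i+3\sqrt{\epsilon}\,b_i,\qquad \lambda_i:=4\sqrt{\epsilon}+d_S(x_i),$$
with $\xi_i\in\partial_P d_S(v_i)$ and $b_i\in\mathbb{B}$. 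By formula \eqref{normal-distancia}, each $\xi_i$ lies in $N^P(S;v_i)\cap\mathbb{B}$, so the hypomonotonicity inequality of Proposition \ref{prox_reg_prop}(c) is applicable and gives
$$\langle \xi_1-\xi_2,\,v_1-v_2\rangle\geq -\tfrac{1}{\rho}\|v_1-v_2\|^2.$$

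Next, I would substitute $\xi_i=(x_i-z_i-3\sqrt{\epsilon}\,b_i)/\lambda_i$ and $v_i=z_i+(v_i-z_i)$ into this inequality, clear denominators by multiplying by $\lambda_1\lambda_2$ (or more symmetrically, split the inner product using $\lambda_i=d_S(x_i)+4\sqrt{\epsilon}$), and expand. The principal terms produce, after rearrangement, precisely $\langle x_1-x_2,z_1-z_2\rangle-\|z_1-z_2\|^2$, while the remaining terms involve products of $\sqrt{\epsilon}$ with one of $\|z_1-z_2\|^2$, $\|z_1-z_2\|$, $\|x_1-x_2\|$, or the constants $\lambda_i,\|b_i\|,\|v_i-z_i\|$. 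Using $x_i\in U_\rho^\gamma(S)$ I would bound $d_S(x_i)\le \gamma\rho$ and then $\|x_i-z_i\|\le\gamma\rho+\sqrt{\epsilon}$, giving a uniform control on $\lambda_i$ and on $\|v_i-z_i\|$. The terms multiplying $\|z_1-z_2\|^2$ would be collected and moved to the left-hand side, forming $(1-\digamma)$; the cross-terms in $\|x_1-x_2\|\,\|z_1-z_2\|$ would be absorbed using Young's inequality into $\sqrt{\epsilon}\|x_1-x_2\|^2+(\text{small})\|z_1-z_2\|^2$; and the remaining terms depend only on $\epsilon,\rho,\gamma$ and so can be packaged into the constant $M$. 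The role of the condition on $\epsilon_0$ is precisely to ensure $\digamma<1$ uniformly over the admissible data.

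For part (a), I would apply (b) with $x_2=x$ and $z_2:=\proj_S(x)$, which is well-defined since $x\in U_\rho(S)$ and trivially belongs to $\proj_S^{\epsilon}(x)$ for every $\epsilon>0$. Choose any $\gamma\in(d_S(x)/\rho,1)$; then for $n$ large enough one has $x_n\in U_\rho^\gamma(S)$ and $\epsilon_n\le\epsilon_0$, so (b) yields
$$(1-\digamma_n)\|z_n-\proj_S(x)\|^2\le \sqrt{\epsilon_n}\|x_n-x\|^2+M\sqrt{\epsilon_n}+\langle x_n-x,\,z_n-\proj_S(x)\rangle.$$
Since $(z_n)$ is bounded (as $\|x_n-z_n\|^2<d_S^2(x_n)+\epsilon_n$ is bounded), Cauchy--Schwarz together with $x_n\to x$ sends the inner product to $0$; the first two terms on the right also vanish. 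Since $\digamma_n\to d_S(x)/\rho<1$, the coefficient $1-\digamma_n$ stays bounded below by a positive constant, and the convergence $z_n\to\proj_S(x)$ follows.

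The main obstacle is the bookkeeping in step (b): because $\lambda_1\neq\lambda_2$ in general, the expansion of $\langle\xi_1-\xi_2,v_1-v_2\rangle$ produces several asymmetric cross-terms, and one must carefully verify that every $O(\sqrt{\epsilon})$ contribution can be absorbed either into the stated expression for $\digamma$ (the terms proportional to $\|z_1-z_2\|^2$) or into the allowed right-hand side of the form $\sqrt{\epsilon}\|x_1-x_2\|^2+M\sqrt{\epsilon}+\langle x_1-x_2,z_1-z_2\rangle$, matching the exact constants announced in the statement.
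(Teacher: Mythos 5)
Your skeleton for part (b) — Lemma \ref{proximal-lemma} to produce companion points $v_i$ with proximal normals, then hypomonotonicity from Proposition \ref{prox_reg_prop}(c), then expansion — is exactly the paper's route, and your reduction of (a) to (b) is a legitimate (if roundabout) alternative to the paper's direct argument for (a), which simply expands $\|z_n-\proj_S(x)\|^2$, uses $\|z_n-x_n\|^2\le d_S^2(x_n)+\epsilon_n\le\|x_n-\proj_S(x)\|^2+\epsilon_n$, and absorbs the prox-regularity term $\tfrac{d_S(x)}{\rho}\|z_n-\proj_S(x)\|^2$ into the left-hand side. However, in (b) you have left the decisive step unresolved, and the obstacle you flag is not mere bookkeeping: it actually defeats the asymmetric normalization you propose. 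Working with $\xi_i=(x_i-z_i-3\sqrt{\epsilon}b_i)/\lambda_i$, $\lambda_i=4\sqrt{\epsilon}+d_S(x_i)$, the mismatch $\lambda_1\neq\lambda_2$ produces, after clearing denominators, an error of the form
\begin{equation*}
\Bigl(\tfrac{1}{\lambda_2}-\tfrac{1}{\lambda_1}\Bigr)\langle x_2-z_2-3\sqrt{\epsilon}b_2,\,v_1-v_2\rangle,
\end{equation*}
whose size is of order $\|x_1-x_2\|\,\|z_1-z_2\|$ with a coefficient of order $1$ (not $O(\sqrt{\epsilon})$), since $|\lambda_1-\lambda_2|\le\|x_1-x_2\|$ while $\|x_i-z_i\|$ is only bounded by $\gamma\rho+\sqrt{\epsilon}$. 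Young's inequality cannot absorb such a term into $\sqrt{\epsilon}\|x_1-x_2\|^2+(\text{small})\|z_1-z_2\|^2$: forcing the coefficient $\sqrt{\epsilon}$ on $\|x_1-x_2\|^2$ puts a factor $1/(4\sqrt{\epsilon})$ on $\|z_1-z_2\|^2$, which blows up as $\epsilon\to 0$ and destroys the constant $1-\digamma$.

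The missing idea is the paper's symmetrization: set $\alpha:=\max\{d_S(x_1),d_S(x_2)\}$ and use the \emph{common} normalizer $\zeta_i:=(x_i-z_i-3\sqrt{\epsilon}b_i)/(4\sqrt{\epsilon}+\alpha)$ for both $i$. This is legitimate because $x_i-z_i-3\sqrt{\epsilon}b_i\in\lambda_i\,\partial_P d_S(v_i)\subset N^P(S;v_i)$, the proximal normal cone is a cone, and $\alpha\ge d_S(x_i)$ guarantees $\|\zeta_i\|\le 1$, so Proposition \ref{prox_reg_prop}(c) applies to the pair $\zeta_1,\zeta_2$. With identical denominators the expansion of $\langle\zeta_1-\zeta_2,v_1-v_2\rangle$ produces exactly the principal term $\langle x_1-x_2,z_1-z_2\rangle-\|z_1-z_2\|^2$ plus genuinely $O(\sqrt{\epsilon})$ remainders, which is what yields the announced $\digamma$ and $M$. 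Without this step your proof of (b) does not close, and hence neither does your proof of (a).
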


\begin{proof}
$(a)$ We observe that for all $n\in\N$
\begin{equation*}
    \begin{aligned}
    \|z_n\|&\leq \|z_n-x_n\| + \|x_n\|\leq d_C(x_n) + \sqrt{\epsilon_n} + \|x_n\|.
    \end{aligned}
\end{equation*}
Hence, since $\epsilon_n\to 0$ and $x_n\to x$, we obtain $(y_n)_n$ is bounded. On the other hand, since $x\in U_\rho(S)$, we obtain $\proj_S(x)$ is well-defined and
    \begin{equation*}
        \begin{aligned}
            \|z_n - \proj_S(x)\|^2 &=  \|z_n - x_n\|^2 - \|x_n-\proj_S(x)\|^2 \\
            &+ 2\langle x-\proj_S(x),z_n-\proj_S(x)\rangle+2\langle z_n-\proj_S(x) ,x_n-x\rangle\\
            &\leq   d_S^2(x_n)+\epsilon_n - \|x_n-\proj_S(x)\|^2 \\
            &+ 2\langle x-\proj_S(x),z_n-\proj_S(x)\rangle+2\langle z_n-\proj_S(x) ,x_n-x\rangle\\
            &\leq   \epsilon_n+ 2\langle x-\proj_S(x),z_n-\proj_S(x)\rangle\\
            &+2\langle z_n-\proj_S(x) ,x_n-x\rangle\\
        \end{aligned}
    \end{equation*}
    where we have used $z_n\in \proj_S^{\epsilon_n}(x_n)$ and that $d_S^2(x_n)\leq \Vert x_n-\operatorname{proj}_S(x)\Vert^2$. Moreover, since $x-\proj_S(x)\in N^P(S;\proj_S(x))$ and $S$ is $\rho$-uniformly prox-regular, we obtain that $$2\langle x-\proj_S(x),z_n-\proj_s(x)\rangle\leq\frac{d_S(x)}{\rho}\|z_n-\proj_S(x)\|^2.$$ Therefore, by using the above inequality and rearranging terms, we obtain that
    \begin{equation*}
        \|z_n - \proj_S(x)\|^2\leq \frac{\rho}{\rho-d_S(x)}\left(\epsilon_n  +2\langle z_n-\proj_S(x) ,x_n-x\rangle\right).
    \end{equation*}
  Finally, since $x_n\to x$ and $(z_n)_n$ is bounded, we conclude that $z_n\to \proj_S(x)$.\\
    $(b)$ By virtue of Lemma \ref{proximal-lemma}, for $i\in\{1,2\}$ there exists $v_i,b_i\in \H$ such that 
    $$
    b_i\in\mathbb{B}, v_i\in\proj_S^\epsilon(x_i), \|z_i-v_i\|<2\sqrt{\epsilon} \textrm{ and } \frac{x_i-z_i - 3\sqrt{\epsilon}b_i}{4\sqrt{\epsilon} + d_S(x_i)}\in\partial_P d_S(v_i).$$
    The hypomonotonicity of proximal normal cone (see Proposition \ref{prox_reg_prop} (b)) implies that
    \begin{equation*}
        \left\langle \zeta_1-\zeta_2, v_1-v_2\right\rangle\geq \frac{-1}{\rho}\|v_1-v_2\|^2,
    \end{equation*}
    where $\zeta_i := \frac{x_i-z_i - 3\sqrt{\epsilon}b_i}{4\sqrt{\epsilon}+\alpha}$ for $i\in\{1,2\}$ and $\alpha := \max\{ d_S(x_1),d_S(x_2) \}$. On the one hand, we have
    \begin{equation*}
        \|v_1-v_2\|\leq \|v_1-z_1\| + \|z_1-z_2\| + \|z_2-v_2\|\leq 4\sqrt{\epsilon}+\|z_1-z_2\|,
    \end{equation*}
    and for all $z\in\H$ and $i\in\{1,2\}$
    \begin{equation*}
        |\langle z,v_i-z_i\rangle| \leq \frac{\sqrt{\epsilon}\|z\|^2}{2} + \frac{\|v_i-z_i\|^2}{2\sqrt{\epsilon}}\leq \frac{\sqrt{\epsilon}\|z\|^2}{2} +2\sqrt{\epsilon}.
    \end{equation*}
    On the other hand,
    \begin{equation*}
        \begin{aligned}
            &\langle (x_1-z_1-3\sqrt{\epsilon}b_1) - (x_2-z_2-3\sqrt{\epsilon}b_2),v_1-v_2\rangle\\
            =& \ 3\sqrt{\epsilon}\langle b_2-b_1,v_1-v_2\rangle + \langle (x_1-x_2)-(z_1-z_2),v_1-v_2 \rangle\\
            =& \ 3\sqrt{\epsilon}\langle b_2-b_1,v_1-v_2\rangle + \langle x_1-x_2,v_1-z_1\rangle +\langle x_1-x_2,z_1-z_2\rangle \\
            &+\langle x_1-x_2,z_2-v_2\rangle -\langle z_1-z_2,v_1-z_1\rangle -\|  z_1 - z_2\|^2 - \langle z_1-z_2,z_2-v_2 \rangle\\
            \leq& \ 6\sqrt{\epsilon}(4\sqrt{\epsilon}+\|z_1-z_2\|) + \sqrt{\epsilon}\|x_1-x_2\|^2 + 8\sqrt{\epsilon} + \langle x_1-x_2,z_1-z_2\rangle\\
            &-(1-\sqrt{\epsilon})\|z_1-z_2\|^2\\
            \leq& \ 24\epsilon + 11\sqrt{\epsilon} + \sqrt{\epsilon}\|x_1-x_2\|^2 + \langle x_1-x_2,z_1-z_2\rangle-(1-4\sqrt{\epsilon})\|z_1-z_2\|^2.
        \end{aligned}
    \end{equation*}
    It follows that
    \begin{equation*}
        \begin{aligned}   
        &\left[1-\frac{\alpha}{\rho} -4\sqrt{\epsilon}(1+\frac{1}{\rho}(1+4\sqrt{\epsilon}+\alpha))\right]\|z_1-z_2\|^2\\
        \leq &  \sqrt{\epsilon}\|x_1-x_2\|^2 + \langle x_1-x_2,z_1-z_2 \rangle+4(4\epsilon+\sqrt{\epsilon})(4\frac{\sqrt{\epsilon}}{\rho} + \gamma) +24\epsilon + 11\sqrt{\epsilon}
        \end{aligned}
    \end{equation*}
    which proves the desired inequality.
\end{proof}

\noindent The following result provides a stability result for a family of equi-uniformly subsmooth sets. We refer to see \cite[Lemma 2.7] {MR3574145} for a similar result.
\begin{lemma}\label{upper-semicontinuity}
    Let $\mathcal{C}=\{C_n\}_{n\in \mathbb{N}}\cup \{C\}$ be a family of nonempty, closed and equi-uniformly subsmooth sets. Assume that $$\lim_{n\to\infty}d_{C_n}(x)= 0, \text{ for all } x\in C.$$ Then, for any sequence $\alpha_n \to \alpha\in \R$ and any sequence $(y_n)$ converging to $y$ with $y_n\in C_n$ and $y\in C$, one has
    $$\limsup_{n\to\infty} \sigma(\xi,\alpha_n\partial d_{C_n}(y_n))\leq \sigma(\xi,\alpha\partial d_{C}(y))  \text{ for all } \xi\in \H.$$
\end{lemma}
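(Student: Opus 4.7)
My plan is to realize the support function almost-optimally by choosing, for each $n$, an element $\zeta_n\in\partial d_{C_n}(y_n)$ with $\langle \xi,\alpha_n\zeta_n\rangle \geq \sigma(\xi,\alpha_n\partial d_{C_n}(y_n))-\tfrac{1}{n}$, and to show that any weak cluster point of $(\zeta_n)$ lies in $\partial d_C(y)$. Since $d_{C_n}$ is $1$-Lipschitz, $\zeta_n\in\mathbb{B}$, so $(\zeta_n)$ is bounded; extracting first a subsequence along which $\sigma(\xi,\alpha_n\partial d_{C_n}(y_n))$ converges to its limsup and then a further weakly convergent one, I may assume $\zeta_n\rightharpoonup\zeta$ with $\|\zeta\|\leq 1$.

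The crux is to convert the equi-uniform subsmoothness into a variational inequality at the limit. Fix $\epsilon>0$ and let $\delta>0$ be provided by equi-uniform subsmoothness of $\mathcal{C}$ (valid simultaneously for every $C_n$ and for $C$). For any $x\in C$ with $\|x-y\|<\delta/3$, the hypothesis $d_{C_n}(x)\to 0$ produces $\tilde x_n\in C_n$ with $\tilde x_n\to x$, so $\|y_n-\tilde x_n\|\leq \delta$ for $n$ large. Since $\zeta_n\in\partial d_{C_n}(y_n)\subset N(C_n;y_n)\cap\mathbb{B}$ and $0\in N(C_n;\tilde x_n)\cap\mathbb{B}$, the definition of equi-uniform subsmoothness yields
\begin{equation*}
\langle \zeta_n, y_n-\tilde x_n\rangle \geq -\epsilon\|y_n-\tilde x_n\|.
\end{equation*}
Writing $\langle\zeta_n,y_n-\tilde x_n\rangle = \langle\zeta_n,y-x\rangle + \langle \zeta_n,(y_n-\tilde x_n)-(y-x)\rangle$ and using the weak convergence $\zeta_n\rightharpoonup\zeta$ in the first term and the boundedness of $\|\zeta_n\|$ together with the strong convergence in the second, the inequality passes to the limit and gives $\langle\zeta,y-x\rangle\geq -\epsilon\|y-x\|$ for every $x\in C$ sufficiently near $y$.

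Because $\epsilon>0$ is arbitrary, this limiting inequality places $\zeta$ in the Fréchet normal cone $N^F(C;y)\cap\mathbb{B}$; the subsmoothness of $C$ at $y$ (inherited from the equi-uniform hypothesis) then yields $\zeta\in N(C;y)\cap \mathbb{B}=\partial d_C(y)$ by the standard identification available for subsmooth sets. Finally, since $\alpha_n\to\alpha$ and $\zeta_n\rightharpoonup\zeta$, one has
\begin{equation*}
\limsup_{n\to\infty}\sigma(\xi,\alpha_n\partial d_{C_n}(y_n))=\lim_n\langle\xi,\alpha_n\zeta_n\rangle=\langle\xi,\alpha\zeta\rangle\leq \sigma(\xi,\alpha\partial d_C(y)),
\end{equation*}
which is the desired inequality. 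The main obstacle I anticipate is precisely the last identification: the weak limit $\zeta$ comes with a Fréchet-type estimate, and one needs to rely on the fact that in the subsmooth setting $N(C;y)\cap\mathbb{B}$ coincides with $\partial d_C(y)$ rather than being merely a superset, so that $\alpha\zeta$ legitimately belongs to $\alpha\partial d_C(y)$.
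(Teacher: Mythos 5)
Your proposal is correct and follows essentially the same route as the paper's proof: select (near-)maximizers $\zeta_n\in\partial d_{C_n}(y_n)\subset N(C_n;y_n)\cap\mathbb{B}$, pass to a weak cluster point, use equi-uniform subsmoothness against nearby points of $C$ approximated from within the $C_n$ to obtain the Fr\'echet-type inequality $\langle\zeta,y-x\rangle\geq-\epsilon\|y-x\|$, and conclude $\zeta\in\partial d_C(y)$. The only remark is that your final identification does not require the subsmooth equality $N(C;y)\cap\mathbb{B}=\partial d_C(y)$: the general fact $N^F(C;y)\cap\mathbb{B}=\partial_F d_C(y)\subset\partial d_C(y)$ (valid for any closed set, and the one the paper invokes) already suffices.
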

\begin{proof}
Fix $\xi\in\H$. Since $\partial d_S(x)\subset \mathbb{B}$ for all $x\in \H$, we observe that $$\beta := \limsup_{n\to\infty} \sigma(\xi,\alpha_n\partial d_{C_n}(y_n))<+\infty.$$ Let us consider a subsequence $(n_k)_k$ such that
$$\beta= \lim_{k\to\infty}\sigma(\xi,\alpha_{n_k}\partial d_{C_{n_k}}(y_{n_k})).$$
Given that $\partial d_{C_{n_k}}(y_{n_k})$ is weakly compact for all $k\in\N$, there is $v_{n_k}\in \partial d_{C_{n_k}}(y_{n_k})$ such that $$\sigma(\xi,\alpha_{n_k}\partial d_{C_{n_k}}(y_{n_k})) = \langle \xi,\alpha_{n_k} v_{n_k} \rangle 
 \textrm{ for all } k\in \mathbb{N}.
$$ 
Moreover, the sequence $(v_{n_k})$ is bounded. Hence, without loss of generality, we can assume that $v_{n_k}\rightharpoonup v\in \mathbb{B}$. It follows that $\beta = \langle \xi,\alpha v\rangle$. By equi-uniformly subsmoothness of $\mathcal{C}$, 
 for any $\epsilon>0$, there is $\delta>0$ such that for all $D\in \mathcal{C}$  and $x_1,x_2\in D$ with $\|x_1-x_2\|<\delta$, one has  
 \begin{equation}\label{equi-sub}
 \langle \zeta_1-\zeta_2,x_1-x_2\rangle\geq -\epsilon\|x_1-x_2\|,
 \end{equation}
 whenever $\zeta_i\in N(D;x_i)\cap\mathbb{B}$ for $i\in \{1,2\}$. Next, let $y'\in C$ such that $\|y-y'\|<\delta/2$. Then, since $d_{C_{n_k}}(y')$ converges to $0$, there is a sequence $(y_{n_k}')$ converging to $y'$ with $y_{n_k}'\in C_{n_k}$ for all $k\in\N$.  Hence, there is $k_0\in\N$ such that $\|y_{n_k}'-y'\|<\delta/2$ for all $k\geq k_0$. On the other hand, since $y_n\to y$, then there is $k_0^{\prime}\in\N$ such that 
 $\|y_{n_k}-y\|<\delta/2$ for all $k\geq k_0^{\prime}$. Hence, if $k\geq \max\{k_0,k_0^{\prime}\}=:\hat{k}$ we have $\|y_{n_k}-y_{n_k}'\|<\delta$. Therefore, it follows from the fact that $0\in \partial d_{C_{n_k}}(y_{n_k}')$ and inequality \eqref{equi-sub} that $$\langle v_{n_k},y_{n_k}-y_{n_k}' \rangle\geq -\epsilon\|y_{n_k}-y_{n_k}'\|   \text{ for all }k\geq \hat{k}.$$
By taking $k\to\infty$, we obtain that 
$$\langle v,y-y'\rangle\geq -\epsilon\|y-y'\| \textrm{ for all } y'\in C\cap \mathbb{B}(y,\delta/2),$$
which implies that $v\in N^F(C;y)$. Then, by \cite[Lemma 4.21]{MR2986672}, $$v\in N^F(C;y)\cap\mathbb{B}=\partial_F d_{C}(y)\subset \partial d_{C}(y).$$ 

\noindent Finally, we have proved that
$$\beta = \langle\xi,\alpha v\rangle\leq \sigma(\xi,\alpha\partial d_{C}(y)),$$
which ends the proof.
 \end{proof}
The following lemma is a convergence theorem for a set-valued map from a topological space into a Hilbert space.

\begin{lemma}\label{lemma_tech_1}
    Let $(E,\tau)$ be a topological space and $\mathcal{G}\colon E\tto \H$ be a set-valued map with nonempty, closed, and convex values. Consider sequences $(x_n)_n\subset E$, $(y_n)_n\subset \H$ and $(\epsilon_n)_n\subset\R_+$ such that 
    \begin{enumerate}
        \item [(i)] $x_n\to x$ (in $E$), $y_n\rightharpoonup y$ (weakly in $\H$) and $\epsilon_n\to 0$;
        \item [(ii)] For all $n\in\N$, $y_n\in \text{co}(\mathcal{G}(x_k)+\epsilon_k\mathbb{B}:k\geq n)$;
        \item [(iii)] $\displaystyle \limsup_{n\to\infty}\sigma(\xi,\mathcal{G}(x_n))\leq \sigma(\xi,\mathcal{G}(x))$ for all $\xi\in\H$.
    \end{enumerate}
    Then, $y\in \mathcal{G}(x)$.
\end{lemma}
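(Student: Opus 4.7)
The plan is to use the characterization of a closed convex set as the intersection of its supporting half-spaces, together with the behavior of the support function under convex hulls and Minkowski sums with balls. Since $\mathcal{G}(x)$ is nonempty, closed, and convex, it suffices by the Hahn--Banach theorem (or, equivalently, by the bipolar representation) to show that
$$
\langle \xi, y\rangle \le \sigma(\xi,\mathcal{G}(x)) \quad \text{for every } \xi \in \H.
$$

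Fix $\xi\in \H$. The key computational step is to evaluate the support function of the set appearing on the right-hand side of (ii). For any family $(C_k)_{k\ge n}$ of subsets of $\H$ (convex or not), the support function of $\mathrm{co}(\bigcup_{k\ge n} C_k)$ equals $\sup_{k\ge n}\sigma(\xi,C_k)$, and for any closed set $C_k$ one has $\sigma(\xi, C_k+\epsilon_k \mathbb{B})=\sigma(\xi,C_k)+\epsilon_k\|\xi\|$. Combining these and applying them to $C_k=\mathcal{G}(x_k)$, condition (ii) yields
$$
\langle \xi, y_n\rangle \;\le\; \sigma\bigl(\xi,\mathrm{co}(\mathcal{G}(x_k)+\epsilon_k \mathbb{B}:k\ge n)\bigr)\;=\;\sup_{k\ge n}\bigl(\sigma(\xi,\mathcal{G}(x_k))+\epsilon_k\|\xi\|\bigr).
$$

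Now pass to the limit in $n$. By (i), $y_n\rightharpoonup y$ gives $\langle\xi,y_n\rangle\to\langle\xi,y\rangle$, and $\epsilon_k\to 0$, so the perturbation term is absorbed in the $\limsup$:
$$
\langle \xi, y\rangle \;=\; \lim_{n\to\infty}\langle \xi, y_n\rangle \;\le\; \limsup_{k\to\infty}\sigma(\xi,\mathcal{G}(x_k)) \;\le\; \sigma(\xi,\mathcal{G}(x)),
$$
where the last inequality is exactly hypothesis (iii). Since $\xi\in\H$ was arbitrary and $\mathcal{G}(x)$ is closed and convex, the standard separation argument (if $y\notin \mathcal{G}(x)$, strictly separate $y$ from $\mathcal{G}(x)$ by some $\xi$, contradicting the previous inequality) yields $y\in\mathcal{G}(x)$.

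The argument is essentially a support-function computation, so I do not anticipate a genuinely hard step. The only place one needs to be a bit careful is justifying the identity $\sigma(\xi,\mathrm{co}(\bigcup_{k\ge n}(\mathcal{G}(x_k)+\epsilon_k\mathbb{B})))=\sup_{k\ge n}(\sigma(\xi,\mathcal{G}(x_k))+\epsilon_k\|\xi\|)$; this follows from the fact that a linear functional over a convex hull attains its supremum at an element of the generating family, and that $\sigma(\xi,A+B)=\sigma(\xi,A)+\sigma(\xi,B)$. With that in hand, (iii) does all the remaining work.
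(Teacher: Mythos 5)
Your proof is correct and follows essentially the same route as the paper's: both arguments reduce to the estimate $\langle\xi,y_n\rangle\le\sup_{k\ge n}\sigma(\xi,\mathcal{G}(x_k))+o(1)$ obtained from hypothesis (ii), pass to the limit using (i) and (iii), and conclude by Hahn--Banach separation. The only differences are cosmetic: the paper argues by contradiction and writes out the finite convex combinations explicitly, whereas you invoke the support-function calculus ($\sigma(\xi,\mathrm{co}(\cdot))$, $\sigma(\xi,A+B)=\sigma(\xi,A)+\sigma(\xi,B)$) and finish directly.
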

\begin{proof}
    Assume by contradiction that $y\notin \mathcal{G}(x)$. By virtue of Hahn-Banach Theorem there exists $\xi\in\H\setminus \{0\}$, $\delta>0$ and $\alpha\in\R$ such that
    \begin{equation*}
        \langle\xi,y'\rangle + \delta\leq \alpha\leq \langle\xi,y\rangle, \ \forall y'\in\mathcal{G}(x). 
    \end{equation*}
    Then, it follows that $\sigma(\xi,\mathcal{G}(x))\leq \alpha-\delta$. Besides, according to (ii) we have for all $n\in\N$, there is a finite set $J_n\subset \N$ such that for all $m\in J_n$, $m\geq n$ and $$y_n =\sum_{j\in J_n}\alpha_j(y_j' + \epsilon_jv_j)$$ where for all $j\in J_n$, $\alpha_j\geq 0$, $v_j\in\mathbb{B}$, $y_j'\in \mathcal{G}(x_j)$ and  $\sum_{j\in J_n}\alpha_j = 1$. Also, there exists $N\in\N$ such that for all $n\geq N$, $\epsilon_n<\frac{\delta}{2\|\xi\|}$. Thus, for $n\geq N$
    \begin{equation*}
        \begin{aligned}
            \langle \xi,y_n\rangle &= \sum_{j\in J_n}\alpha_j\langle\xi,y_j'+\epsilon_jv_j\rangle\\
            & \leq \sum_{j\in J_n}\alpha_j\sup_{k\geq n}\sigma(\xi,\mathcal{G}(x_k)) + \sum_{j\in J_n}\alpha_j\epsilon_j\langle\xi,v_j\rangle\\
            &\leq \sup_{k\geq n}\sigma(\xi,\mathcal{G}(x_k)) + \|\xi\|\sum_{j\in J_n}\alpha_j\frac{\delta}{2\|\xi\|}\leq \sup_{k\geq n}\sigma(\xi,\mathcal{G}(x_k)) + \frac{\delta}{2}.
        \end{aligned}
    \end{equation*}
    Therefore, as $y_n\rightharpoonup y$, letting $n\to\infty$ in the last inequality we obtain that
    \begin{equation*}
        \begin{aligned}
            \langle \xi,y\rangle&\leq \limsup_{n\to\infty}\sigma(\xi,\mathcal{G}(x_n)) + \frac{\delta}{2}\leq \sigma(\xi,\mathcal{G}(x))+\frac{\delta}{2}
        \end{aligned}
    \end{equation*}
    Therefore, $\langle\xi,y\rangle\leq \alpha-\delta/2\leq \langle\xi,y\rangle-\delta/2$, which is a contradiction. The proof is then complete.
\end{proof}

\section{Catching-up algorithm with errors for sweeping processes}\label{Sec-existence}
In this section, we propose a numerical method for the existence of solutions for the sweeping process:
\begin{equation}\label{Fixed-sweep}
\left\{
\begin{aligned}
\dot{x}(t)&\in -N\left(C(t);x(t)\right)+F(t,x(t)) & \textrm{ a.e. } t\in [0,T],\\
x(0)&=x_0\in C(0),
\end{aligned}
\right.
\end{equation}
where $C\colon [0,T]\tto \H$ is a set-valued map with closed values in a Hilbert space $\H$, $N\left(C(t);x\right)$ stands for  the Clarke normal cone to $C(t)$ at $x$, and  $F\colon [0,T]\times \H \rightrightarrows \H$ is a given set-valued map with nonempty closed and convex values. Our algorithm is based on the catching-up algorithm, except that we do not ask for an exact calculation of the projections.

 The proposed algorithm is given as follows. For $n\in \mathbb{N}^{\ast}$, let $(t_k^n)_{k=0}^n$ be a uniform partition of $[0,T]$ with uniform time step $\mu_n:=T/n$. Let $(\varepsilon_n)_n$ be a sequence of positive numbers such that $\varepsilon_n/\mu_n^2\to 0$.  We consider a sequence of piecewise continuous linear approximations $(x_n)_n$ defined as  $x_n(0)=x_0$ and for any $k\in \{0,\ldots,n-1\}$ and  $t\in ]t_{k}^n,t_{k+1}^n]$
\begin{equation}\label{Prox-algorithm-xn}
    x_n(t)=x_k^n +\frac{t-t_k^n}{\mu_n}\left(x_{k+1}^n-x_k^n-\int_{t_{k}^n}^{t_{k+1}^n}f(s,x_k^n)ds\right) + \int_{t_k^n}^{t}f(s,x_k^n)ds,
\end{equation}
where $x_0^n=x_0$ and
\begin{equation}\label{Prox-algorithm}
x_{k+1}^n\in \proj_{C(t_{k+1}^n)}^{\varepsilon_n}\left(x_k^n +\int_{t_{k}^n}^{t_{k+1}^n}f(s,x_k^n)ds\right)\ \text{for }k\in\{0,1,...,n-1\}.
\end{equation}
Here $f(t,x)$ denotes any selection of $F(t,x)$ such that $f(\cdot,x)$ is measurable for all $x\in\H$. For simplicity, we consider $f(t,x)\in \operatorname{proj}_{F(t,x)}^{\gamma}(0)$ for some $\gamma>0$.  In Proposition \ref{prop_meas}, we prove that it is possible to obtain such measurable selection under mild assumptions.

The above algorithm is called {\it catching-up algorithm with approximate projections} because the projection is not necessarily exactly calculated. We will prove that the above algorithm converges for several families of algorithms as long as the inclusion \eqref{Prox-algorithm} is verified.

 Let us consider functions $\delta_n(\cdot)$ and $\theta_n(\cdot)$ defined as
\begin{equation*}
\delta_n(t)=\begin{cases}
t_k^n & \textrm{ if } t\in [t_k^n,t_{k+1}^n[\\
t_{n-1}^n & \textrm{ if } t=T,
\end{cases}
\text{ and }
\theta_n(t)=\begin{cases}
t_{k+1}^n & \textrm{ if } t\in [t_k^n,t_{k+1}^n[\\
T & \textrm{ if } t=T.
\end{cases}
\end{equation*}

\noindent In what follows, we show useful properties satisfied for the above algorithm, which will help us to prove the existence of the sweeping process (\ref{Fixed-sweep}) in three cases:
\begin{enumerate}
    \item [(i)] The set-valued map $t\tto C(t)$ takes uniformly prox-regular values. 
    \item [(ii)] The set-valued map $t\tto C(t)$ takes subsmooth and ball-compact values.
    \item [(iii)] $C(t) \equiv C$ in $[0,T]$ and $C$ is ball-compact. 
\end{enumerate}

Throughout this section, $F\colon [0,T]\times \H\rightrightarrows \H$ will be a set-valued map with nonempty, closed, and convex values. Moreover, we will consider the following conditions:
\begin{enumerate}
\item[\namedlabel{FixH1}{$(\mathcal{H}_1^F)$}] For all $t\in [0,T]$, $F(t,\cdot)$ is upper semicontinuous from $\H$ into $\H_w$.
\item[\namedlabel{FixH2}{$(\mathcal{H}_2^F)$}] There exists $h\colon \H \to \mathbb{R}^+$ Lipschitz continuous (with constant $L_h>0$) such that
  \begin{equation*}
    \begin{aligned}
    d\left(0,F(t,x)\right):=\inf\{\Vert w\Vert : w\in F(t,x)\}\leq h(x) ,
    \end{aligned}
  \end{equation*}
  for all $x\in \H$ and a.e. $t\in [0,T]$.
  \item [\namedlabel{FixH3}{$(\mathcal{H}_3^F)$}] There is $\gamma>0$ such that the set-valued map $(t,x)\tto \proj_{F(t,x)}^\gamma(0)$ has a selection $f\colon [0,T]\times\H\to \H$ such that $f(\cdot,x)$ is measurable for all $x\in\H$.  
\end{enumerate}
The following proposition provides conditions for the feasibility of hypothesis \ref{FixH3}.
\begin{proposition}\label{prop_meas}
    Let us assume that $\H$ is a separable Hilbert space. Moreover we suppose $F(\cdot,x)$ is measurable for all $x\in\H$, then \ref{FixH3} holds for all $\gamma>0$.
\end{proposition}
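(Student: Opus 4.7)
The plan is to exploit the convexity of the values of $F$: for each pair $(t,x) \in [0,T] \times \H$, the set $F(t,x)$ is nonempty, closed, and convex in the Hilbert space $\H$, hence admits a \emph{unique} element of minimum norm, which I would take as the candidate selection
$$
f(t,x) := \proj_{F(t,x)}(0).
$$
Since $\|f(t,x)\|^2 = d^2(0,F(t,x)) < d^2(0,F(t,x)) + \gamma$ for every $\gamma > 0$, this single choice $f$ is automatically a selection of the set-valued map $(t,x) \tto \proj_{F(t,x)}^\gamma(0)$ for \emph{all} $\gamma > 0$ at once, which is exactly what the proposition asserts. The whole problem therefore reduces to showing that, for each fixed $x \in \H$, the map $t \mapsto \proj_{F(t,x)}(0)$ is Lebesgue measurable on $[0,T]$.

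For the measurability step I would invoke the standard machinery of measurable closed-valued multifunctions. Fix $x \in \H$. Since $F(\cdot,x)$ is measurable with nonempty closed values and $\H$ is separable, Castaing's representation theorem (already cited in Section~2, see \cite[Theorem 6.2.20]{MR2527754}) yields a countable family $(g_n)_n$ of measurable selections of $F(\cdot,x)$ with $F(t,x) = \overline{\{g_n(t) : n \in \N\}}$ for every $t$. From this representation one gets $d(0,F(t,x)) = \inf_n \|g_n(t)\|$, which is measurable in $t$. For each $k \in \N$, define
$$
n_k(t) := \min\left\{n \in \N : \|g_n(t)\|^2 \leq d^2(0,F(t,x)) + 1/k\right\},
$$
which is a well-defined, measurable $\N$-valued function, and set $\varphi_k(t) := g_{n_k(t)}(t)$. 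This produces a measurable, $1/k$-approximate minimum-norm selection of $F(\cdot,x)$.

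The final step is to pass to the limit in $k$. A quick application of the parallelogram identity in $\H$, together with the convexity of $F(t,x)$ (which keeps the midpoint of two near-optimal points inside the set), forces the sequence $(\varphi_k(t))_k$ to be Cauchy at each $t$ and to converge to the unique minimum-norm element $f(t,x)$. Being a pointwise limit of measurable $\H$-valued maps, $f(\cdot,x)$ is then measurable, which closes the argument. The only delicate point I anticipate is this last convexity step: without convex values one loses uniqueness of the minimizer, and the clean ``one $f$ works for every $\gamma > 0$'' shortcut would no longer apply---one would then have to produce a separate measurable approximate selection for each $\gamma$, which is the more pedestrian route the proposition allows us to avoid.
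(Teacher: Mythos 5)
Your argument is correct, but it takes a genuinely different route from the paper's. The paper fixes $x$ and $\gamma$, writes the graph of $t\tto\proj^{\gamma}_{F(t,x)}(0)$ as the intersection of the (measurable) graph of $F(\cdot,x)$ with the measurable set $\{(t,y):\|y\|^2<d(0,F(t,x))^2+\gamma\}$, and then invokes the measurable-selection theorem for graph-measurable multifunctions (\cite[Theorem 6.3.20]{MR2527754}) to extract a measurable selection; convexity of the values plays no role there, and the selection produced depends on $\gamma$. You instead single out the exact minimal-norm selection $f(t,x)=\proj_{F(t,x)}(0)$ --- whose existence and uniqueness do require the closed \emph{convex} values that $F$ is assumed to have --- and establish its measurability by hand: Castaing's representation gives a dense sequence of measurable selections, a least-index choice gives measurable $1/k$-near-minimizers $\varphi_k$, and the parallelogram identity combined with convexity yields $\|\varphi_k(t)-\varphi_j(t)\|^2\leq 2(1/k+1/j)$, so the $\varphi_k$ converge pointwise to the minimal-norm element. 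All of these steps are sound. What your approach buys is a canonical selection that lies in $\proj^{\gamma}_{F(t,x)}(0)$ simultaneously for every $\gamma>0$, and it avoids the Yankov--von Neumann--Aumann theorem (hence any completeness-of-the-$\sigma$-algebra issues) at the cost of a longer construction; the paper's argument is shorter and, notably, would survive if $F$ had merely closed, nonconvex values, where your minimal-norm shortcut breaks down exactly as you anticipate.
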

\begin{proof}
    Let $\gamma>0$ and fix $x\in \H$. Since the set-valued map $F(\cdot,x)$ is measurable, the map $t\mapsto d(0,F(t,x))$ is a measurable function. Let us define the set-valued map $\mathcal{F}_x\colon t\tto \proj_{F(t,x)}^{\gamma}(0)$. Then, 
    \begin{equation*}
        \begin{aligned}
            \gph\mathcal{F}_x &= \{(t,y)\in [0,T]\times \H : y\in \proj_{F(t,x)}^{\gamma}(0)\}\\
            &= \{(t,y)\in [0,T]\times \H :\|y\|^2< d(0,F(t,x))^2 + \gamma\text{ and } y\in F(t,x)\}\\
            &=\gph F(\cdot,x)\cap \{(t,y)\in [0,T]\times \H :\|y\|^2< d(0,F(t,x))^2 + \gamma\}.
        \end{aligned}
    \end{equation*} 
    Hence, $\gph\mathcal{F}_x$ is a measurable set. Consequently, $\mathcal{F}_x$ has a measurable selection (see \cite[Theorem 6.3.20]{MR2527754}). Denoting by $t\mapsto f(t,x)$ such selection, we obtain the result.
\end{proof}
Now, we establish the main properties of the proposed algorithm.

\begin{theorem}\label{properties_alg}
    Assume, in addition to \ref{FixH1}, \ref{FixH2} and \ref{FixH3}, that $C\colon [0,T]\tto \H$ is a set-valued map with nonempty and closed values such that 
    \begin{equation}\label{lipschitz_hausdorff}
  d_H(C(t),C(s))\leq L_C|t-s| \text{ for all } t,s\in [0,T].       
    \end{equation}
    Then, the sequence of functions $(x_n\colon [0,T]\to \H)_{n\in\mathbb{N}}$ generated by the numerical scheme \eqref{Prox-algorithm-xn} and \eqref{Prox-algorithm} satisfies the following properties:
    \begin{enumerate}[ref=\ref{properties_alg}-(\alph*)]
        \item[$(a)$] \label{prop1_a} There are nonnegative constants $K_1,K_2,K_3, K_4,K_5$ such that for all $n\in\mathbb{N}$ and $t\in [0,T]$:
            \begin{enumerate}
                \item[(i)] $d_{C(\theta_n(t))}(x_n(\delta_n(t)) + \int_{\delta_n(t)}^{\theta_n(t)}f(s,x_n(\delta_n(t)))ds)\leq (L_C + h(x(\delta_n(t)))+\sqrt{\gamma})\mu_n.$ 
                \item[(ii)] $\|x_n(\theta_n(t)) - x_0\|\leq K_1.$ 
                \item[(iii)] $\|x_n(t) \|\leq K_2.$
                \item[(iv)] $\|x_n(\theta_n(t)) - x_n(\delta_n(t))\|\leq K_3\mu_n + \sqrt{\epsilon_n}.$
                \item[(v)] $\|x_n(t)-x_n(\theta_n(t))\|\leq K_4\mu_n + 2\sqrt{\epsilon_n}$.
            \end{enumerate}
        \item[$(b)$] \label{prop1_b} There exists $K_5>0$ such that for all $t\in [0,T]$ and $m,n\in \mathbb{N}$ we have 
        \begin{equation*}
            d_{C(\theta_n(t))}(x_m(t))\leq K_5\mu_m + L_C\mu_n + 2\sqrt{\epsilon_m}.
        \end{equation*}
        \item[$(c)$] \label{prop1_c} There exists $K_6>0$ such that for all $n\in\mathbb{N}$ and almost all $t\in [0,T]$, $\|\dot{x}_n(t)\|\leq K_6$.
        \item[$(d)$] \label{prop1_d} For all $n\in\mathbb{N}$ and $k\in\{0,1,...,n-1\}$, there is $v_{k+1}^n\in C(t_{k+1}^n)$ such that for all $t\in ]t_k^n,t_{k+1}^n[$:
        \begin{equation}\label{inclusion1}
            \dot{x}_n(t)\in -\frac{\lambda_n(t)}{\mu_n}\partial_P d_{C(\theta_n(t))}(v_{k+1}^n) + f(t,x_n(\delta_n(t))) +\frac{3\sqrt{\epsilon_n}}{\mu_n}\mathbb{B},
        \end{equation}
        where $\lambda_n(t) = 4\sqrt{\epsilon_n} + (L_C + h(x(\delta_n(t)))+\sqrt{\gamma})\mu_n$.\\ Moreover, $\|v_{k+1}^n-x_n(\theta_n(t))\|<2\sqrt{\epsilon_n}$.
    \end{enumerate}
\end{theorem}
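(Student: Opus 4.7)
The plan is to establish the five bounds in (a) in order, since each builds on the previous, then combine them for (b) and (c), and close with (d). Two tools carry most of the burden: the Lipschitz Hausdorff continuity \eqref{lipschitz_hausdorff} of $C$ and Lemma \ref{proximal-lemma}, which converts the approximate projection defining $x_{k+1}^n$ into a proximal subgradient inclusion. From \ref{FixH2} and \ref{FixH3}, the drift selection satisfies $\|f(t,x)\| < \sqrt{h(x)^2 + \gamma} \leq h(x) + \sqrt{\gamma}$, a bound I will use throughout to control the integral terms.

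For (a)(i), since $x_n(\delta_n(t)) = x_k^n \in C(\delta_n(t))$, estimate \eqref{lipschitz_hausdorff} gives $d_{C(\theta_n(t))}(x_k^n) \leq L_C \mu_n$; the triangle inequality applied to the added integral of $f$ then produces (i). For (a)(ii) and (a)(iv), I combine (i) with the defining property of $x_{k+1}^n$ as an $\varepsilon_n$-projection to obtain $\|x_{k+1}^n - x_k^n\| \leq (L_C + 2h(x_k^n) + 2\sqrt{\gamma})\mu_n + \sqrt{\varepsilon_n}$, which is (iv) as soon as a uniform bound for $h(x_k^n)$ is available. To establish that bound and hence (ii), I use the Lipschitz inequality $h(x_k^n) \leq h(x_0) + L_h \|x_k^n - x_0\|$ and apply a discrete Gronwall argument; the key observation is that $\sum_{k=0}^{n-1} \sqrt{\varepsilon_n} = n\sqrt{\varepsilon_n} = T\sqrt{\varepsilon_n}/\mu_n \to 0$ by the hypothesis $\varepsilon_n/\mu_n^2 \to 0$, so the accumulated projection error stays summable as $n\to\infty$. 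Estimate (iii) then follows by bounding $\|x_n(t)\|$ between $\|x_k^n\|$ and $\|x_{k+1}^n\|$ through \eqref{Prox-algorithm-xn}, and (v) is the straightforward algebraic consequence of writing $x_n(t) - x_{k+1}^n$ directly from \eqref{Prox-algorithm-xn} and inserting (iv).

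For (b), I split $d_{C(\theta_n(t))}(x_m(t)) \leq \|x_m(t) - x_m(\theta_m(t))\| + d_{C(\theta_n(t))}(x_m(\theta_m(t)))$; the first term is bounded by (v) applied with step $\mu_m$, and the second uses $x_m(\theta_m(t)) \in C(\theta_m(t))$ together with \eqref{lipschitz_hausdorff}, with the contribution $L_C\mu_m$ absorbed into $K_5 \mu_m$. For (c), differentiating \eqref{Prox-algorithm-xn} on $]t_k^n, t_{k+1}^n[$ yields $\dot{x}_n(t) = \mu_n^{-1}(x_{k+1}^n - x_k^n - \int_{t_k^n}^{t_{k+1}^n} f(s,x_k^n)\,ds) + f(t, x_k^n)$; plugging in (iv) and the bound on $\|f\|$, together with $\sqrt{\varepsilon_n}/\mu_n \to 0$, yields the uniform constant $K_6$.

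Finally, (d) is where Lemma \ref{proximal-lemma} enters: applied to the approximate projection defining $x_{k+1}^n$, it produces $v_{k+1}^n \in C(t_{k+1}^n)$ with $\|v_{k+1}^n - x_{k+1}^n\| < 2\sqrt{\varepsilon_n}$ and $x_k^n + \int f\,ds - x_{k+1}^n \in \bigl(4\sqrt{\varepsilon_n} + d_{C(t_{k+1}^n)}(x_k^n + \int f\,ds)\bigr)\,\partial_P d_{C(t_{k+1}^n)}(v_{k+1}^n) + 3\sqrt{\varepsilon_n}\mathbb{B}$. Substituting the distance estimate (i) into the coefficient gives exactly $\lambda_n(t)$; dividing by $\mu_n$, rearranging, and adding $f(t, x_n(\delta_n(t)))$ to both sides reproduces \eqref{inclusion1}. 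The main technical obstacle is the discrete Gronwall step in (a)(ii), where the $x$-dependent drift bound $h(x_k^n)$, the cumulative projection error $n\sqrt{\varepsilon_n}$, and the Hausdorff variation $L_C T$ must be absorbed simultaneously into a uniform estimate; the assumption $\varepsilon_n/\mu_n^2 \to 0$ is precisely what keeps the projection contribution from blowing up as $n\to\infty$.
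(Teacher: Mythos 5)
Your proposal is correct and follows essentially the same route as the paper: the same use of the Hausdorff--Lipschitz bound and the drift estimate $\|f(t,x)\|\leq h(x)+\sqrt{\gamma}$ for (a)(i), the same discrete Gronwall recursion (with $\sqrt{\epsilon_n}/\mu_n$ controlled by $\epsilon_n/\mu_n^2\to 0$) for (a)(ii)--(v), the same splitting for (b) and (c), and the same application of Lemma \ref{proximal-lemma} for (d). The only cosmetic difference is that the paper keeps $\sup_n\sqrt{\epsilon_n}/\mu_n$ as a finite constant $\mathfrak{c}$ rather than using that it tends to zero, which is immaterial here.
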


\begin{proof}
    $(a)$: Set $\mu_n := T/n$ and let $(\epsilon_n)$ be a sequence of nonnegative numbers such that $\epsilon_n/\mu_n^2\to 0$. We define $\mathfrak{c} := \sup_{n\in\mathbb{N}} \frac{\sqrt{\epsilon_n}}{\mu_n}$. We denote by $L_h$ the Lipschitz constant of $h$. For all $t\in [0,T]$ and $n\in\N$, we define $\tau_n(t) := x_n(\delta_n(t)) + \int_{\delta_n(t)}^{\theta_n(t)}f(s,x_n(\delta_n(t)))ds$. Since  $f(t,x_n(\delta_n(t)))\in \proj_{F(t,x_n(\delta_n(t)))}^{\gamma}(0)$ we obtain that
    \begin{equation*}
        \begin{aligned}
            d_{C(\theta_n(t))}(\tau_n(t)) &\leq d_{C(\theta_n(t))}(x_n(\delta_n(t))) + \left\| \int_{\delta_n(t)}^{\theta_n(t)}f(s,x_n(\delta_n(t)))ds \right\|\\
            &\leq L_C\mu_n + \int_{\delta_n(t)}^{\theta_n(t)}\|f(s,x_n(\delta_n(t)))\|ds\\
            &\leq L_C\mu_n + \int_{\delta_n(t)}^{\theta_n(t)}(h(x_n(\delta_n(t)))+\sqrt{\gamma})ds\\
            &\leq (L_C + h(x_n(\delta_n(t)))+\sqrt{\gamma})\mu_n,
        \end{aligned}
    \end{equation*}
    which proves $(i)$.   Moreover, since $x_n(\theta_n(t))\in \proj_{C(\theta_n(t))}^{\epsilon_n}(\tau_n(t))$, we get that
    \begin{equation}\label{cota_1}
        \begin{aligned}
            \| x_n(\theta_n(t)) - \tau_n(t) \| &\leq d_{C(\theta_n(t))}(\tau_n(t)) + \sqrt{\epsilon_n}\\
            &\leq (L_C + h(x_n(\delta_n(t)))+\sqrt{\gamma})\mu_n + \sqrt{\epsilon_n},
        \end{aligned}
    \end{equation}
    which yields
    \begin{equation}\label{cota_2}
        \begin{aligned}
            \| x_n(\theta_n(t)) - x_n(\delta_n(t)) \| &\leq (L_C + 2h(x_n(\delta_n(t))) + 2\sqrt{\gamma})\mu_n + \sqrt{\epsilon_n}\\
            &\leq (L_C + 2h(x_0) + 2\sqrt{\gamma} + 2L_h\|x_n(\delta_n(t))-x_0\|)\mu_n \\
            &+ \sqrt{\epsilon_n}.
        \end{aligned}
    \end{equation}
    Hence, for all $t\in [0,T]$
    \begin{equation*}
        \begin{aligned}
            \| x_n(\theta_n(t)) - x_0 \|
            &\leq (1+2L_h\mu_n)\|x_n(\delta_n(t))-x_0\|\\
            &+(L_C + 2h(x_0) + 2\sqrt{\gamma})\mu_n + \sqrt{\epsilon_n}.
        \end{aligned}
    \end{equation*}
    The above inequality means that for all $k\in \{0,1,...,n-1\}$:
    \begin{equation*}
        \begin{aligned}
            \| x_{k+1}^n - x_0 \|
            &\leq (1+2L_h\mu_n)\|x_k^n-x_0\|+(L_C + 2h(x_0)+2\sqrt{\gamma})\mu_n + \sqrt{\epsilon_n}.
        \end{aligned}
    \end{equation*}
    Then, by \cite[p.~183]{Clarke1998}, we obtain that for all $k\in \{0,...,n-1\}$
    \begin{equation}\label{cota_3}
        \begin{aligned}
            \| x_{k+1}^n - x_0 \| &\leq (k+1)((L_C + 2h(x_0)+2\sqrt{\gamma})\mu_n + \sqrt{\epsilon_n})\exp(2L_h(k+1)\mu_n)\\
            &\leq T(L_C + 2h(x_0) +\sqrt{\gamma} + \mathfrak{c})\exp(2L_h T)=:K_1.
        \end{aligned}
    \end{equation}
    which proves $(ii)$.\\
    $(iii)$: By definition of $x_n$,  for $t\in ]t_k^n,t_{k+1}^n]$ and $k\in \{0,1...,n-1\}$, using \eqref{Prox-algorithm-xn}
    \begin{equation*}
        \begin{aligned}
            \| x_n(t) \| &\leq \|x_k^n\| + \| x_{k+1}^n - \tau_n(t) \| + \int_{t_k^n}^t\|f(s,x_k^n)\|ds\\
            &\leq K_1 + \|x_0\| + (L_C +\sqrt{\gamma} +h(x_k^n))\mu_n + \sqrt{\epsilon_n} + (h(x_k^n)+\sqrt{\gamma})\mu_n,
        \end{aligned}
    \end{equation*}
    where we have used \eqref{cota_1}. Moreover, it is clear that for $k\in\{0,...,n\}$
    \begin{equation*}
        \begin{aligned}
            h(x_k^n)\leq h(x_0) + L_h\|x_k^n - x_0\|\leq h(x_0) + L_hK_1.
        \end{aligned}
    \end{equation*}
    Therefore, for all $t\in [0,T]$
    \begin{equation*}
        \begin{aligned}
            \|x_n(t)\| &\leq K_1 + \|x_0\| + (L_C + 2(h(x_0) + L_hK_1+\sqrt{\gamma}))\mu_n + \sqrt{\epsilon_n}\\
            &\leq K_1 + \|x_0\| + T(L_C+2(h(x_0) + L_hK_1+\sqrt{\gamma}) + \mathfrak{c}) =: K_2,
        \end{aligned}
    \end{equation*}
    which proves $(iii)$.\\
    $(iv)$: From \eqref{cota_2} and \eqref{cota_3} it is easy to see that there exists $K_3>0$ such that for all $n\in\N$ and $t\in[0,T]$: $\| x_n(\theta_n(t)) - x_n(\delta_n(t)) \|\leq K_3\mu_n + \sqrt{\epsilon_n}$.\\
    $(v)$: To conclude this part, we consider $t\in ]t_k^n,t_{k+1}^n]$ for some $k\in \{0,1,...,n-1\}$. Then $x_n(\theta_n(t)) = x_{k+1}^n$ and also
    \begin{equation*}
        \begin{aligned}
            \|x_n(\theta_n(t))-x_n(t)\| \leq & \|x_{k+1}^n - x_k^n\| + \|x_{k+1}^n - \tau_n(t)\| + \int_{t_k^n}^t \|f(s,x_k^n)\|ds\\
            \leq & K_3\mu_n + \sqrt{\epsilon_n} +  (L_C + \sqrt{\gamma}+ h(x_0) + L_h K_1)\mu_n + \sqrt{\epsilon_n}\\
            &+ \mu_n (h(x_k^n)+\sqrt{\gamma})\\
            \leq & (\underbrace{K_3 + L_C + 2(h(x_0) + L_hK_1)+2\sqrt{\gamma}}_{=:K_4})\mu_n + 2\sqrt{\epsilon_n},
        \end{aligned}
    \end{equation*}
    and we conclude this first part.\\
    $(b)$: Let $m,n\in\N$ and $t\in [0,T]$, then 
    \begin{equation*}
        \begin{aligned}
            d_{C(\theta_n(t))}(x_m(t)) &\leq d_{C(\theta_n(t))}(x_m(\theta_m(t))) + \|x_m(\theta_m(t))-x_m(t)\|\\
            &\leq d_H(C(\theta_n(t)),C(\theta_m(t))) + K_4\mu_m + 2\sqrt{\epsilon_m}\\
            &\leq L_C|\theta_n(t) - \theta_m(t)| + K_4\mu_m + 2\sqrt{\epsilon_m}\\
            &\leq L_C(\mu_n + \mu_m) + K_4\mu_m + 2\sqrt{\epsilon_m}
        \end{aligned}
    \end{equation*}
    where we have used (v).
    Hence, by setting $K_5 := K_4 + L_C$ we prove (b).\\ 
    $(c)$: Let $n\in\N$, $k\in\{0,1,...,n-1\}$ and $t\in ]t_k^n,t_{k+1}^n]$. Then,
    \begin{equation*}
        \begin{aligned}
            \|\dot{x}_n(t)\| &= \left\|  \frac{1}{\mu_n}\left(x_{k+1}^n-x_k^n-\int_{t_{k}^n}^{t_{k+1}^n}f(s,x_k^n)ds\right) + f(t,x_k^n)\right\|\\
            &\leq \frac{1}{\mu_n}\|x_n(\theta_n(t))-\tau_n(t)\| + \|f(t,x_k^n)\|\\
            &\leq \frac{1}{\mu_n}((L_C + h(x_k^n)+\sqrt{\gamma})\mu_n+\sqrt{\epsilon_n}) + h(x_k^n) + \sqrt{\gamma}\\
            &\leq \frac{\sqrt{\epsilon_n}}{\mu_n} + L_C + 2(h(x_0) + L_hK_1 + \sqrt{\gamma})\\
            &\leq \mathfrak{c} + L_C + 2(h(x_0) + L_hK_1+ \sqrt{\gamma}) =: K_6,
        \end{aligned}
    \end{equation*}
    which proves $(c)$.\\
    $(d)$: Fix $k\in \{0,1,...,n-1\}$ and $t\in ]t_k^n,t_{k+1}^n[$. Then, $x_{k+1}^n\in \proj_{C(t_{k+1}^n)}^{\epsilon_n}(\tau_n(t))$. Hence,  by Lemma \ref{proximal-lemma}, there exists $v_{k+1}^n\in C(t_{k+1}^n)$ such that $\|x_{k+1}-v_{k+1}^n\|<2\sqrt{\epsilon_n}$ and 
    \begin{equation*}
        \begin{aligned}
            \tau_n(t)-x_{k+1}^n\in \alpha_n(t)\partial_P d_{C(t_{k+1}^n)}(v_{k+1}^n) + 3\sqrt{\epsilon_n}\mathbb{B}, \ \forall t\in ]t_k^n,t_{k+1}^n[,
        \end{aligned}
    \end{equation*}
    where $\alpha_n(t) = 4\sqrt{\epsilon_n} + d_{C(\theta_n(t))}(\tau_n(t))$. By virtue of $(i)$,  $$\alpha_n(t)\leq 4\sqrt{\epsilon_n} + (L_C + h(x(\delta_n(t)))+\sqrt{\gamma})\mu_n=:\lambda_n(t).$$ 
    Then, for all $t\in ]t_k^n,t_{k+1}^n[$  
    \begin{equation*}
        -\mu_n(\dot{x}_n(t)-f(t,x_k^n))\in \lambda_n(t)\partial_P d_{C(t_{k+1}^n)}(v_{k+1}^n) + 3\sqrt{\epsilon_n}\mathbb{B},
    \end{equation*}
    which implies that $t\in ]t_k^n,t_{k+1}^n[$
    \begin{equation*}
        \dot{x}_n(t)\in -\frac{\lambda_n(t)}{\mu_n}\partial_P d_{C(t_{k+1}^n)}(v_{k+1}^n) + f(t,x_k^n) + \frac{3\sqrt{\epsilon_n}}{\mu_n}\mathbb{B}.
    \end{equation*}
\end{proof}

\section{Prox-Regular Case}

In this section, we will study the algorithm under the assumption of uniform prox-regularity of the moving set. The classic catching-up algorithm in this framework was studied  \cite{MR2159846}, where the existence of solutions for \eqref{Fixed-sweep} was established for a set-valued map $F$ taking values in a fixed compact set.

\begin{theorem}\label{thm-prox}
Suppose, in addition to the assumptions of Theorem \ref{properties_alg}, that $C(t)$ is $\rho$-uniformly prox-regular  for all $t\in [0,T]$, and there exists a nonnegative integrable function $k$ such that for all $t\in [0,T]$ and $x,x'\in\H$
    \begin{equation}\label{monotonicity}
        \langle y-y',x-x'\rangle\leq k(t)\|x-x'\|^2, \ \forall y\in F(t,x), \forall y'\in F(t,x').
    \end{equation}
    Then, the sequence of functions $(x_n)_n$ generated by the algorithm \eqref{Prox-algorithm-xn} and \eqref{Prox-algorithm} converges uniformly to an absolutely continuous function $x$, which is the unique solution of \eqref{Fixed-sweep}.
\end{theorem}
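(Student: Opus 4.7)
The plan is to prove uniqueness by a standard Gronwall argument, then show $(x_n)_n$ generated by \eqref{Prox-algorithm-xn}--\eqref{Prox-algorithm} is Cauchy in $C([0,T];\H)$, and finally pass to the limit in inclusion \eqref{inclusion1} to identify the limit as a solution of \eqref{Fixed-sweep}. For uniqueness, given two solutions $x, y$, one writes $\dot x - f_x \in -N(C(t); x)$ with uniformly bounded multipliers (since $\dot x$, $f_x$ are essentially bounded), normalizes them into $\mathbb{B}$, and applies the $1/\rho$-hypomonotonicity of Proposition \ref{prox_reg_prop}$(c)$ together with \eqref{monotonicity} to obtain $\tfrac{d}{dt}\|x-y\|^2 \leq 2(c+k(t))\|x-y\|^2$ a.e., forcing $x=y$.

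For the Cauchy property, from Theorem \ref{properties_alg}$(d)$ there exist $\xi_n(t) \in \partial_P d_{C(\theta_n(t))}(v_{k+1}^n) \subset N^P(C(\theta_n(t)); v_{k+1}^n) \cap \mathbb{B}$ and $r_n(t)\in\mathbb{B}$ with
\[
\dot x_n(t) = -\tfrac{\lambda_n(t)}{\mu_n}\xi_n(t) + f(t, x_n(\delta_n(t))) + \tfrac{3\sqrt{\varepsilon_n}}{\mu_n}r_n(t) \quad \text{a.e.}
\]
By Theorem \ref{properties_alg}$(b)$, $d_{C(\theta_n(t))}(v_{k+1}^m) \to 0$ uniformly as $n,m\to\infty$, so for large $n,m$ one can define $\pi_{m,n}(t) := \operatorname{proj}_{C(\theta_n(t))}(v_{k+1}^m)$ via prox-regularity. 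Applying the hypomonotonicity of $N^P(C(\theta_n(t));\cdot)\cap\mathbb{B}$ between $(v_{k+1}^n, \xi_n(t))$ and the unit normal at $\pi_{m,n}(t)$ (and symmetrically with $n,m$ interchanged) bounds $\langle \xi_n(t) - \xi_m(t), x_n(t) - x_m(t)\rangle$ from below by $-c_1\|x_n(t) - x_m(t)\|^2 - \eta_{n,m}(t)$ with $\int_0^T|\eta_{n,m}|\,dt \to 0$. Subtracting the two versions of the above inclusion, taking inner product with $x_n(t) - x_m(t)$, and using \eqref{monotonicity} on the $F$-contribution yields
\[
\tfrac{d}{dt}\|x_n(t) - x_m(t)\|^2 \leq 2(c_1 + k(t))\|x_n(t) - x_m(t)\|^2 + \tilde\eta_{n,m}(t),
\]
with $\int_0^T|\tilde\eta_{n,m}|\,dt \to 0$. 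Gronwall's lemma delivers uniform convergence to some absolutely continuous $x$.

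To identify $x$ as a solution: $(\dot x_n)$ is uniformly bounded by Theorem \ref{properties_alg}$(c)$, so $\dot x_n \rightharpoonup \dot x$ in $L^2$ up to a subsequence. In \eqref{inclusion1}, the perturbation $\tfrac{3\sqrt{\varepsilon_n}}{\mu_n}\mathbb{B}$ vanishes because $\varepsilon_n/\mu_n^2 \to 0$; $v_{k+1}^n \to x(t)$ uniformly since $\|v_{k+1}^n - x_n(\theta_n(t))\| < 2\sqrt{\varepsilon_n}$ and $x_n\to x$; and $\lambda_n(t)/\mu_n$ is bounded and convergent to some $\alpha(t)$. A uniformly prox-regular family with Hausdorff-Lipschitz motion \eqref{lipschitz_hausdorff} is equi-uniformly subsmooth, so Lemma \ref{upper-semicontinuity} supplies the upper-semicontinuity hypothesis required by Lemma \ref{lemma_tech_1} applied to $\mathcal{G}(t,y) := -\alpha(t)\partial d_{C(t)}(y) + F(t,y)$ (recall that $\alpha(t)\partial d_{C(t)}(x(t))\subset N(C(t);x(t))$ whenever $x(t)\in C(t)$, which follows by passing to the limit in Theorem \ref{properties_alg}$(b)$). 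A Mazur-type convexification on the weak-$L^2$ convergence of $\dot x_n - f(\cdot,x_n(\delta_n(\cdot)))$ combined with hypothesis \ref{FixH1} then gives $\dot x(t) \in -N(C(t); x(t)) + F(t, x(t))$ a.e., as required.

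The main obstacle is the Cauchy step: the multipliers $\xi_n, \xi_m$ live in proximal subdifferentials of distance functions to \emph{different} moving sets at \emph{different} anchor points $v_{k+1}^n, v_{k+1}^m$, not at $x_n(t), x_m(t)$. Coupling the Hausdorff-Lipschitz variation \eqref{lipschitz_hausdorff} with prox-regular projection stability (Proposition \ref{prox_reg_prop}$(b)$), and verifying that the accumulated errors over $O(1/\mu_n)$ steps still vanish, is precisely what the calibration $\varepsilon_n/\mu_n^2 \to 0$ is designed to enforce.
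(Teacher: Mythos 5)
Your proposal is correct and follows essentially the same route as the paper: a Gronwall estimate on $\|x_n-x_m\|^2$ that combines the prox-regularity of the moving sets with the monotonicity condition \eqref{monotonicity} on $F$ to obtain the Cauchy property, followed by Mazur's lemma together with Lemma \ref{upper-semicontinuity} and Lemma \ref{lemma_tech_1} to identify the limit as a solution. The only cosmetic differences are that you handle the normal-cone cross term via auxiliary projections and the hypomonotonicity of Proposition \ref{prox_reg_prop}(c), whereas the paper applies the enlargement inequality of Proposition \ref{prox_reg_prop}(d) directly at the anchor points $v_{k+1}^n, v_{j+1}^m$ (using $d_{C(t_{k+1}^n)}(v_{j+1}^m)\leq L_C(\mu_n+\mu_m)$ to absorb the set mismatch), and that you spell out the uniqueness argument, which the paper asserts without detail.
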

\begin{proof}
    Consider $m,n\in\N$ with $m\geq n$ big enough such that for all $t\in [0,T]$, $d_{C(\theta_n(t))}(x_m(t))<\rho$, this can be guaranteed by Theorem \ref{prop1_b}. Then,  for a.e. $t\in [0,T]$ 
    \begin{equation*}
        \frac{d}{dt}\left(\frac{1}{2}\|x_n(t)-x_m(t)\|^2\right) = \langle \dot{x}_n(t)-\dot{x}_m(t), x_n(t)-x_m(t)\rangle.
    \end{equation*}
    Let $t\in [0,T]$ where the above equality holds. Let $k, j \in\{0,1,...,n-1\}$ such that $t\in ]t_k^n,t_{k+1}^n]$ and $t\in ]t_j^m,t_{j+1}^m]$. On the one hand,  we have that
    \begin{equation}\label{bound_1}
        \begin{aligned}
            \langle \dot{x}_n(t)-\dot{x}_m(t), x_n(t)-x_m(t)\rangle  = & \ \langle \dot{x}_n(t)-\dot{x}_m(t),x_n(t)-x_{k+1}^n\rangle\\
            &+ \langle \dot{x}_n(t)-\dot{x}_m(t),x_{k+1}^n-v_{k+1}^n\rangle\\
            & + \langle \dot{x}_n(t)-\dot{x}_m(t),v_{k+1}^n -v_{j+1}^m\rangle\\
            &+\langle \dot{x}_n(t)-\dot{x}_m(t), v_{j+1}^m- x_{j+1}^m \rangle \\
            &+ \langle\dot{x}_n(t)-\dot{x}_m(t),x_{j+1}^m- x_m(t)\rangle\\
            \leq  & \ 2K_6(K_4(\mu_n +\mu_m) +4(\sqrt{\epsilon_n} + \sqrt{\epsilon_m}))\\
         &+\langle \dot{x}_n(t)-\dot{x}_m(t),v_{k+1}^n -v_{j+1}^m\rangle   
        \end{aligned}
    \end{equation}
where $v_{k+1}^n\in C(t_{k+1}^n)$ and $v_{j+1}^m\in C(t_{j+1}^m)$ are the given in Theorem \ref{prop1_d}. We can see that 
\begin{equation*}
    \begin{aligned}
 \max\{d_{C(t_{k+1}^n)}(v_{j+1}^m),d_{C(t_{j+1}^m)}(v_{k+1}^n)\} &\leq d_H(C(t_{j+1}^m),C(t_{k+1}^n))\\
    &\leq L_C|t_{j+1}^m-t_{k+1}^n|\leq L_C(\mu_n+\mu_m).        
    \end{aligned}
\end{equation*}
From now, $m,n\in\N$ are big enough such that $L_C(\mu_n+\mu_m)<\frac{\rho}{2}$. Moreover, as $h$ is $L_h$-Lipschitz, we have that for all $p\in\N$, $i\in \{0,1,...,p\}$ and $t\in[0,T]$
$$\|f(t,x_i^p)\|\leq h(x_i^p)+\sqrt{\gamma}\leq h(x_0) + L_hK_1+\sqrt{\gamma}=:\alpha. $$ From the other hand, using \eqref{inclusion1} and Proposition \ref{prox_reg_prop} we have that 
    \begin{equation*}
        \begin{aligned}
        &\frac{1}{\digamma}\max\{\left\langle \zeta_n-\dot{x}_n(t),v_{j+1}^m-v_{k+1}^n\right\rangle, \left\langle \zeta_m-\dot{x}_m(t),v_{k+1}^n-v_{j+1}^m\right\rangle\}\\
        \leq & \frac{2}{\rho}\|v_{k+1}^n-v_{j+1}^m\|^2+L_C(\mu_n+\mu_m)
        \end{aligned}
    \end{equation*}
    where $\xi_n,\xi_m\in\mathbb{B}$, $\digamma:=\sup\{\frac{\lambda_\ell(t)}{\mu_\ell}:t\in [0,T],\ell\in\N\}$ and $\zeta_i := f(t,x_i(\delta_i(t)))+\frac{3\sqrt{\epsilon_i}}{\mu_i}\xi_i$ for $i\in \{n,m\}$. Therefore, we  have that
\begin{equation*}
    \begin{aligned}
&\langle \dot{x}_n(t)-\dot{x}_m(t),v_{k+1}^n -v_{j+1}^m\rangle\\
    &\hspace{21mm}=   \langle \dot{x}_n(t)-\zeta_n,v_{k+1}^n -v_{j+1}^m\rangle+\langle  \zeta_n-\zeta_m,v_{k+1}^n -v_{j+1}^m\rangle \\
    &\hspace{21mm}+ \langle  \zeta_m-\dot x_m(t),v_{k+1}^n -v_{j+1}^m\rangle \\
   &\hspace{21mm} \leq   2\digamma(\frac{2}{\rho}\|v_{k+1}^n-v_{j+1}^m\|^2 + L_C(\mu_n+\mu_m))+\langle  \zeta_n-\zeta_m,v_{k+1}^n -v_{j+1}^m\rangle\\
    &\hspace{21mm} \leq  \frac{4\digamma}{\rho}(\|x_n(t)-x_m(t)\| + 3(\sqrt{\epsilon_n}+\sqrt{\epsilon_m}) + K_4(\mu_n+\mu_m))^2 \\
    &\hspace{21mm}+ 2\digamma L_C(\mu_n+\mu_m)
    +\langle  \zeta_n-\zeta_m,v_{k+1}^n -v_{j+1}^m\rangle.
    \end{aligned}
\end{equation*}
Moreover, using Theorem \ref{properties_alg} and property \eqref{monotonicity},
\begin{equation*}
    \begin{aligned}
        &  \langle  \zeta_n-\zeta_m,v_{k+1}^n -v_{j+1}^m\rangle\\
        &\hspace{21mm} =   \langle  f(t,x_n(\delta_n(t)))-f(t,x_m(\delta_m(t))),x_n(\delta_n(t))-x_m(\delta_m(t))\rangle\\
    &\hspace{21mm} + \langle  f(t,x_n(\delta_n(t)))-f(t,x_m(\delta_m(t))),v_{k+1}^n-x_{k+1}^n\rangle\\
    &\hspace{21mm}+ \langle  f(t,x_n(\delta_n(t)))-f(t,x_m(\delta_m(t))),x_{k+1}^n-x_k^n\rangle\\
    &\hspace{21mm} + \langle  f(t,x_n(\delta_n(t)))-f(t,x_m(\delta_m(t))),x_j^m-x_{j+1}^m\rangle \\
    &\hspace{21mm}+ \langle  f(t,x_n(\delta_n(t)))-f(t,x_m(\delta_m(t))),x_{j+1}^m-v_{j+1}^m\rangle \\
    &\hspace{21mm} + \frac{3\sqrt{\epsilon_n}}{\mu_n}\langle\xi_n,v_{k+1}^n-v_{j+1}^m\rangle + \frac{3\sqrt{\epsilon_m}}{\mu_m}\langle\xi_m,v_{j+1}^m-v_{k+1}^n\rangle\\
    &\hspace{21mm}\leq  \ k(t)\|x_n(\delta_n(t))-x_m(\delta_m(t))\|^2\\
    &\hspace{21mm} + 2\alpha(3(\sqrt{\epsilon_n}+\sqrt{\epsilon_m})+K_3(\mu_n+ \mu_m)) \\
    &\hspace{21mm} + \frac{3\sqrt{\epsilon_n}}{\mu_n}\|v_{k+1}^n-v_{j+1}^m\| + \frac{3\sqrt{\epsilon_m}}{\mu_m}\|v_{j+1}^m-v_{k+1}^n\|\\
    &\hspace{21mm}\leq   k(t)(\|x_n(t)-x_m(t)\| + 3(\sqrt{\epsilon_n}+\sqrt{\epsilon_m})+(K_3+K_4)(\mu_n+\mu_m))^2\\
    &\hspace{21mm} + 2\alpha(3(\sqrt{\epsilon_n}+\sqrt{\epsilon_m})+K_3(\mu_n+ \mu_m)) \\
    &\hspace{21mm} + 6\left(\frac{\sqrt{\epsilon_n}}{\mu_n}+\frac{\sqrt{\epsilon_m}}{\mu_m}\right)(\sqrt{\epsilon_n}+\sqrt{\epsilon_m}+K_2).
    \end{aligned}
\end{equation*}
 \noindent These two inequalities and \eqref{bound_1}  yield 
\begin{equation*}
    \begin{aligned}
        &\frac{d}{dt}\|x_n(t)-x_m(t)\|^2\\
        &\hspace{10mm}\leq  \ 4\left(\frac{4\digamma}{\rho}+k(t)\right)\|x_n(t)-x_m(t)\|^2+4\alpha(3(\sqrt{\epsilon_n}+\sqrt{\epsilon_m})+K_3(\mu_n+ \mu_m))\\
        &\hspace{10mm}+4\digamma L_C(\mu_n+\mu_m)+12\left(\frac{\sqrt{\epsilon_n}}{\mu_n}+\frac{\sqrt{\epsilon_m}}{\mu_m}\right)(\sqrt{\epsilon_n}+\sqrt{\epsilon_m}+K_2)\\
        &\hspace{10mm}+\frac{16\digamma}{\rho}(3(\sqrt{\epsilon_n}+\sqrt{\epsilon_m}) + K_4(\mu_n+\mu_m))^2\\
        &\hspace{10mm}+4k(t)(3(\sqrt{\epsilon_n}+\sqrt{\epsilon_m})+(K_3+K_4)(\mu_n+\mu_m))^2.
    \end{aligned}
\end{equation*} 
Hence, using Gronwall's inequality, we have for all $t\in [0,T]$ and $n,m$ big enough:
\begin{equation}\label{xn-xm}
    \|x_n(t)-x_m(t)\|^2\leq A_{m,n}\exp\left(\frac{16\digamma}{\rho}T+4\int_0^T k(s)ds\right),
\end{equation}
where,
\begin{equation*}
    \begin{aligned}
        A_{m,n} &=\ 4\alpha T(3(\sqrt{\epsilon_n}+\sqrt{\epsilon_m})+K_3(\mu_n+ \mu_m))\\
        &+4T\digamma L_C(\mu_n+\mu_m)+12T\left(\frac{\sqrt{\epsilon_n}}{\mu_n}+\frac{\sqrt{\epsilon_m}}{\mu_m}\right)(\sqrt{\epsilon_n}+\sqrt{\epsilon_m}+K_2)\\
        &+\frac{16T\digamma}{\rho}(3(\sqrt{\epsilon_n}+\sqrt{\epsilon_m}) + K_4(\mu_n+\mu_m))^2\\
        &+4\|k\|_1(3(\sqrt{\epsilon_n}+\sqrt{\epsilon_m})+(K_3+K_4)(\mu_n+\mu_m))^2.
    \end{aligned}
\end{equation*}
Since $A_{m,n}$ goes to $0$ when $m,n\to\infty$, it shows that $(x_n)_{n\in\N}$ is a Cauchy sequence in the space of continuous functions with the uniform convergence. Therefore, it converges uniformly to some continuous function $x\colon [0,T]\to \H$. It remains to check that $x$ is absolutely continuous, and it is the unique solution of \eqref{Fixed-sweep}. First of all, by Theorem \ref{prop1_c} and \cite[Lemma 2.2]{MR3626639}, $x$ is absolutely continuous and there is a subsequence of $(\dot{x}_{n})$ which converges weakly in $L^1([0,T];\H)$ to $\dot{x}$. So, without relabeling, we have $\dot{x}_n\rightharpoonup \dot{x}$ in $L^1([0,T];\H)$. On the other hand,  using Theorem \ref{prop1_d} and defining $v_n(t) := v_{k+1}^n$ for $t\in ]t_k^n,t_{k+1}^n]$ we have
\begin{equation*}
    \begin{aligned}
        \dot{x}_n(t) &\in -\frac{\lambda_n(t)}{\mu_n}\partial_P d_{C(\theta_n(t))}(v_n(t)) + f(t,x_n(\delta_n(t))) +\frac{3\sqrt{\epsilon_n}}{\mu_n}\mathbb{B}\\
        &\in -\kappa_1\partial d_{C(\theta_n(t))}(v_n(t)) + \kappa_2\mathbb{B}\cap F(t,x_n(\delta_n(t))) + \frac{3\sqrt{\epsilon_n}}{\mu_n}\mathbb{B}.
    \end{aligned}
\end{equation*}
where, by Theorem \ref{properties_alg},  $\kappa_1$ and $\kappa_2$ are nonnegative numbers which do not depend of $n\in\N$ and $t\in [0,T]$. We also have $v_n\to x$, $\theta_n\to \text{Id}_{[0,T]}$ and $\delta_n\to \text{Id}_{[0,T]}$ uniformly. Theorem \ref{prop1_b} ensures that $x(t)\in C(t)$ for all $t\in [0,T]$. By Mazur's Lemma, there is a sequence $(y_j)$ such that for all $n$, $y_n\in \text{co}(\dot{x}_k:k\geq n)$ and $(y_n)$ converges strongly to $\dot{x}$ in $L^1([0,T];\H)$. That is to say $$y_n(t)\in \text{co}\left( -\kappa_1\partial d_{C(\theta_k(t))}(v_k(t)) + \kappa_2\mathbb{B}\cap F(t,x_k(\delta_k(t))) +\frac{3\sqrt{\epsilon_k}}{\mu_k}\mathbb{B} :k\geq n\right).$$ Hence, there exists $(y_{n_j})$ which converges to $\dot{x}$ almost everywhere in $[0,T]$. Then, by virtue of Lemma \ref{upper-semicontinuity}, \ref{FixH1} and Lemma \ref{lemma_tech_1}, we obtain that $$\dot{x}(t)\in -\kappa_1\partial d_{C(t)}(x(t)) +\kappa_2\mathbb{B}\cap F(t,x(t)) \textrm{ for a.e. } t\in [0,T].$$
Finally, since $\partial d_{C(t)}(x(t))\subset N(C(t);x(t))$ for all $t\in[0,T]$, we have $x$ is the solution of \eqref{Fixed-sweep}.
\end{proof}
\begin{remark}
    The property required for $F$ in \eqref{monotonicity} is a classical monotonicity assumption in the theory of existence of solutions for differential inclusions (see, e.g., \cite[Theorem 10.5]{MR1189795}).
\end{remark}

\begin{remark}[Rate of convergence]
    In the precedent proof, we have established the following estimation:
    \begin{equation*}
    \|x_n(t)-x_m(t)\|^2\leq A_{m,n}\exp\left(\frac{16\digamma}{\rho}T+4\int_0^T k(s)ds\right)
\end{equation*}
for $m,n$ such that $\mu_n+\mu_m<\frac{\rho}{2L_C}$. Hence, by letting $m\to \infty$, we obtain that 
\begin{equation*}
    \|x_n(t)-x(t)\|^2\leq A_n\exp\left(\frac{16\digamma}{\rho}T+4\int_0^T k(s)ds\right) \textrm{ for all } n>\frac{2L_C T}{\rho},
\end{equation*}
where $$A_n := \lim_{m\to\infty} A_{m,n}\leq D(\sqrt{\epsilon_n}  + \mu_n + \frac{\sqrt{\epsilon_n}}{\mu_n}),$$ where $D$ is a nonnegative constant. Hence, the above estimation provides a rate of convergence for our scheme.  
\end{remark}

\section{Subsmooth case}
In this section, we study the sweeping process \eqref{Fixed-sweep} in a more general setting than the uniformly prox-regular case. We now assume $(C(t))_{t\in [0,T]}$ is a equi-uniformly subsmooth family. The classical catching-up algorithm was studied in \cite{MR3574145} under this framework. In this case, we make the assumption about the ball compactness of the moving sets. We will see that our algorithm allows us to prove the existence of a solution, but we only ensure that a subsequence converges to this solution, which is expected due to the lack of uniqueness of solutions in this case.
\begin{theorem}
Suppose, in addition to assumptions of theorem  \ref{properties_alg}, that the family $(C(t))_{t\in [0,T]}$ is equi-uniformly subsmooth and the set $C(t)$ are ball-compact for all $t\in [0,T]$. Then, the sequence of continuous functions $(x_n)_n$ generated by algorithm  \eqref{Prox-algorithm-xn} and \eqref{Prox-algorithm} converges uniformly (up to a subsequence) to an absolutely continuous function $x$, which is a solution of \eqref{Fixed-sweep}.
\end{theorem}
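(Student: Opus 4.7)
The plan is to adapt the end of the proof of Theorem \ref{thm-prox} by replacing the Cauchy argument (which crucially used the hypomonotonicity of $N^P(C(t);\cdot)\cap\mathbb{B}$) with a compactness argument exploiting the ball-compactness of the moving sets. The closure step that passes the discrete inclusion to the limit will be essentially the same as in the prox-regular case, since at that stage the prox-regular proof only used general upper-semicontinuity facts.

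More precisely, Theorem \ref{properties_alg}(a)(iii) gives the uniform bound $\|x_n(t)\|\leq K_2$, and Theorem \ref{properties_alg}(c) asserts that $(x_n)_n$ is equi-Lipschitz. For each fixed $t\in [0,T]$, Theorem \ref{properties_alg}(b) shows that $d_{C(\theta_n(t))}(x_m(t))\to 0$ as $m,n\to\infty$; combined with \eqref{lipschitz_hausdorff}, this places $\{x_m(t):m\in\mathbb{N}\}$ within vanishing distance of $C(t)\cap (K_2+1)\mathbb{B}$, which is compact by ball-compactness. Hence the Arzel\`a--Ascoli theorem extracts a subsequence (not relabeled) converging uniformly to a Lipschitz function $x\colon[0,T]\to\H$ with $x(t)\in C(t)$ for every $t$. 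As in the prox-regular proof, $(\dot{x}_n)_n$ is bounded in $L^\infty$, so along a further subsequence one has $\dot{x}_n\rightharpoonup\dot{x}$ weakly in $L^1([0,T];\H)$ (see \cite[Lemma 2.2]{MR3626639}).

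It remains to prove the differential inclusion. Define $v_n(t):=v_{k+1}^n$ on $]t_k^n,t_{k+1}^n]$; the bound $\|v_n(t)-x_n(\theta_n(t))\|<2\sqrt{\epsilon_n}$ combined with $x_n\to x$ uniformly and $\theta_n(t)\to t$ gives $v_n(t)\to x(t)$ and $x_n(\delta_n(t))\to x(t)$ uniformly on $[0,T]$. Rewriting \eqref{inclusion1} and using the uniform bounds of Theorem \ref{properties_alg}, one obtains
\begin{equation*}
\dot{x}_n(t)\in -\kappa_1\,\partial d_{C(\theta_n(t))}(v_n(t)) + F(t,x_n(\delta_n(t)))\cap\kappa_2\mathbb{B} + \tfrac{3\sqrt{\epsilon_n}}{\mu_n}\mathbb{B},
\end{equation*}
for constants $\kappa_1,\kappa_2\geq 0$ independent of $n$ and $t$. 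Mazur's lemma applied to $\dot{x}_n\rightharpoonup\dot{x}$ in $L^1$ produces convex combinations converging strongly in $L^1$, and hence pointwise a.e. along a subsequence. For such $t$, I apply Lemma \ref{lemma_tech_1} on $E:=[0,T]\times\H\times\H$ with $\mathcal{G}(s,y,z):=-\kappa_1\partial d_{C(s)}(y)+F(s,z)\cap\kappa_2\mathbb{B}$, evaluated along $(\theta_n(t),v_n(t),x_n(\delta_n(t)))\to(t,x(t),x(t))$. The required upper-semicontinuity condition splits additively into two pieces: Lemma \ref{upper-semicontinuity} covers the distance-subdifferential term (the subfamily $\{C(\theta_n(t))\}_n\cup\{C(t)\}$ is equi-uniformly subsmooth and $d_{C(\theta_n(t))}\to d_{C(t)}$ pointwise on $C(t)$ by \eqref{lipschitz_hausdorff}), while \ref{FixH1} handles the $F$-term. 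The resulting conclusion $\dot{x}(t)\in -\kappa_1\partial d_{C(t)}(x(t))+F(t,x(t))$ together with $\partial d_{C(t)}(x(t))\subset N(C(t);x(t))$ yields \eqref{Fixed-sweep}.

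The main obstacle is precisely the loss of the Cauchy step: under mere equi-uniform subsmoothness there is no quantitative monotonicity on $N^P(C(\cdot);\cdot)\cap\mathbb{B}$, so the estimate \eqref{xn-xm} is unavailable and one can only claim convergence up to a subsequence---in agreement with the failure of uniqueness in this setting. Ball-compactness is therefore indispensable to produce any convergent subsequence at all, and Lemma \ref{upper-semicontinuity} is the technical device that makes it possible to pass the distance subdifferential to the limit when both the base point and the set vary simultaneously; this last point is the genuine improvement over the classical catching-up analysis required to accommodate approximate projections.
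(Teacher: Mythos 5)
Your proposal is correct and follows essentially the same route as the paper's own proof: ball-compactness plus the distance estimates of Theorem \ref{properties_alg}(b) give relative compactness of $\{x_n(t)\}$, Arzel\`a--Ascoli and Dunford--Pettis yield the uniformly convergent subsequence and the weak limit of the derivatives, and the limit inclusion is obtained via Mazur's lemma together with Lemma \ref{upper-semicontinuity}, Lemma \ref{lemma_tech_1} and \ref{FixH1}. Your explicit choice of the product space $E=[0,T]\times\H\times\H$ and the map $\mathcal{G}(s,y,z)=-\kappa_1\partial d_{C(s)}(y)+F(s,z)\cap\kappa_2\mathbb{B}$ merely makes precise the application of Lemma \ref{lemma_tech_1} that the paper leaves implicit.
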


\begin{proof}
    From Theorem \ref{prop1_d} we have for all $n\in\N$ and $k\in \{0,\ldots,n-1\}$, there is $v_{k+1}^n\in C(t_{k+1}^n)$ such that $\|v_{k+1}^n-x_{k+1}^n\|<2\sqrt{\epsilon_n}$ and for all $t\in ]t_k^n,t_{k+1}^n]$:
\begin{equation*}
    \dot{x}_n(t)\in -\frac{\lambda_n(t)}{\mu_n}\partial_P d_{C(\theta_n(t))}(v_{k+1}^n) + f(t,x_n(\delta_n(t))) +\frac{3\sqrt{\epsilon_n}}{\mu_n}\mathbb{B}.
\end{equation*}
where $\lambda_n(t) = 4\sqrt{\epsilon_n} + (L_C + h(x(\delta_n(t)))+\sqrt{\gamma})\mu_n$. As $h$ is $L_h$-Lipschitz it follows that $$\lambda_n(t)\leq (4\mathfrak{c}+L_C+h(x_0)+\sqrt{\gamma} + L_hK_1)\mu_n.$$ Defining $v_n(t):=v_{k+1}^n$ on $]t_k^n,t_{k+1}^n]$, then for all $n\in\N$ and almost all $t\in [0,T]$
\begin{equation}\label{inclusion6}
    \begin{aligned}
        \dot{x}_n(t) &\in -M\partial_P d_{C(\theta_n(t))}(v_n(t)) + f(t,x_n(\delta_n(t))) +\frac{3\sqrt{\epsilon_n}}{\mu_n}\mathbb{B}\\
        &\in -M\partial d_{C(\theta_n(t))}(v_n(t)) + M\mathbb{B}\cap F(t,x_n(\delta_n(t))) +\frac{3\sqrt{\epsilon_n}}{\mu_n}\mathbb{B}.     
    \end{aligned}
\end{equation}
where $M:= 4\mathfrak{c}+L_C+h(x_0) + L_hK_1+\sqrt{\gamma}$.
 Moreover, by Theorem \ref{prop1_b},  we have 
\begin{equation}\label{cota2222}
\begin{aligned}
d_{C(t)}(x_n(t))\leq d_{C(\theta_n(t))}(x_n(t)) + L_C\mu_n\leq   (K_5+2L_C)\mu_n+2\sqrt{\varepsilon_n}.
\end{aligned}
\end{equation}
for all  $t\in [0,T]$. \newline
Next, fix $t\in [0,T]$ and define $K(t):=\{x_n(t) : n\in \mathbb{N}\}$. We claim that $K(t)$ is relatively compact. Indeed, let $x_m(t) \in K(t)$ and take $y_m(t)\in \operatorname{Proj}_{C(t)}(x_m(t))$ (the projection exists due to the ball compactness of $C(t)$ and the boundedness of $K(t)$). Moreover, according to \eqref{cota2222} and Theorem \ref{prop1_a},
\begin{equation*}
\begin{aligned}
\Vert y_n(t)\Vert & \leq d_{C(t)}(x_n(t))+\Vert x_n(t)\Vert \leq  (K_5+2L_C)\mu_n+2\sqrt{\varepsilon_n}+K_2.
\end{aligned}
\end{equation*}
This entails that $y_n(t)\in C(t)\cap R\, \mathbb{B}$ for all $n\in\mathbb{N}$ for some $R>0$. Thus, by the ball compactness of $C(t)$, there exists a subsequence $(y_{m_k}(t))_{m_k}$ of $(y_m(t))_m$ converging to some $y(t)$ as $k\to +\infty$. Then, 
\begin{equation*}
\begin{aligned}
\Vert x_{m_k}(t)-y(t)\Vert &\leq d_{C(t)}(x_{m_k}(t))+\Vert y_{m_k}(t)-y(t)\Vert \\
&\leq (K_5+2L_C)\mu_{m_k}+2\sqrt{\varepsilon_{m_k}}+\Vert y_{m_k}(t)-y(t)\Vert,
\end{aligned}
\end{equation*}
which implies that $K(t)$ is relatively compact. Moreover, it is not difficult to see by Theorem \ref{prop1_c} that $K:=(x_n)$ is equicontinuous.  Therefore, by virtue of Theorem \ref{prop1_c}, Arzela-Ascoli's and Dunford-Pettis's Theorems, we obtain the existence of a Lipschitz function $x$ and a subsequence $(x_j)_j$ of $(x_n)_n$ such that
\begin{enumerate}[label=(\roman{*})]
\item $(x_j)$ converges uniformly to $x$ on $[0,T]$.
\item  $\dot{x}_j\rightharpoonup \dot{x}$ in $L^1\left([0,T];\H\right)$.
\item $x_j(\theta_j(t))\to x(t)$ for all $t\in [0,T]$.
\item $x_j(\delta_j(t))\to x(t)$ for all $t\in [0,T]$.
\item $v_j(t)\to x(t)$ for all $t\in [0,T]$.
\end{enumerate}
From \eqref{cota2222} it is clear that $x(t)\in C(t)$ for all $t\in [0,T]$. By Mazur's Lemma, there is a sequence $(y_j)$ such that for all $j$, $y_j\in \text{co}(\dot{x}_k:k\geq j)$ and $(y_j)$ converges strongly to $\dot{x}$ in $L^1([0,T];\H)$. That is to say $$y_j(t)\in \text{co}\left( -M\partial d_{C(\theta_n(t))}(v_n(t)) + M\mathbb{B}\cap F(t,x_n(\delta_n(t))) +\frac{3\sqrt{\epsilon_n}}{\mu_n}\mathbb{B} :n\geq j\right).$$ On the other hand, there exists $(y_{n_j})$ which converges to $\dot{x}$ almost everywhere in $[0,T]$. Then, using Lemma \ref{upper-semicontinuity}, Lemma \ref{lemma_tech_1} and \ref{FixH1}, we have $$\dot{x}(t)\in -M\partial d_{C(t)}(x(t)) +M\mathbb{B}\cap F(t,x(t)) \ \text{a.e.}$$
Finally, since $\partial d_{C(t)}(x(t))\subset N(C(t);x(t))$ for all $t\in [0,T]$, it follows that $x$ is the solution of \eqref{Fixed-sweep}.
\end{proof}

\section{Fixed set}
In this section, we consider a closed and nonempty set $C\subset \H$, and we look for a solution of the particular case of \eqref{Fixed-sweep} given by
\begin{equation}\label{Fixed-sweep-2}
\left\{
\begin{aligned}
\dot{x}(t)&\in -N\left(C;x(t)\right)+F(t,x(t)) & \textrm{ a.e. } t\in [0,T],\\
x(0)&=x_0\in C,
\end{aligned}
\right.
\end{equation}
where $F\colon [0,T]\times \H\tto \H$ is a set-valued map defined as above. The existence of a solution using classical catching up was done in \cite{MR3956966}. Now we use similar ideas to get the existence of a solution using our proposed algorithm. We emphasize that in this case, no regularity of the set $C$ is required.
\begin{theorem}\label{Main-fixed}
Let $C\subset \H$ be a ball-compact set and $F\colon [0,T]\times \H\rightrightarrows \H$ be a set-valued map satisfying \ref{FixH1}, \ref{FixH2} and \ref{FixH3}. Then, for any $x_0\in S$, the sequence of functions $(x_n)_n$ generated by the algorithm \eqref{Prox-algorithm} converges uniformly (up to a subsequence) to a  Lipschitz solution  $x$ of the sweeping process \textup{(\ref{Fixed-sweep-2})} such that
\begin{equation*}
\begin{aligned}
\Vert \dot{x}(t)\Vert  & \leq  2(h(x(t))+\sqrt{\gamma}) & \textrm{ a.e. } t\in [0,T].
\end{aligned}
\end{equation*}

\end{theorem}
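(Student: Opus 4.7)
The plan is to mimic the structure of the subsmooth case, exploiting two simplifications: since $C$ is constant, we may take $L_C=0$ in the Lipschitz condition of Theorem \ref{properties_alg}; and since $C$ is a fixed closed set, the Clarke subdifferential $\partial d_C$ is already upper semicontinuous from $\H$ into $\H_w$ with weakly compact convex values (as $d_C$ is $1$-Lipschitz), so no appeal to Lemma \ref{upper-semicontinuity} is needed.

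First I invoke Theorem \ref{properties_alg}(d) to obtain, for each $n$ and a.e.\ $t\in[0,T]$, the discrete inclusion
$$\dot x_n(t)\in -\frac{\lambda_n(t)}{\mu_n}\partial_P d_C(v_n(t))+f(t,x_n(\delta_n(t)))+\frac{3\sqrt{\varepsilon_n}}{\mu_n}\mathbb{B},$$
with $v_n(t)\in C$ satisfying $\|v_n(t)-x_n(\theta_n(t))\|<2\sqrt{\varepsilon_n}$ and uniform bounds on $\|x_n\|$ and $\|\dot x_n\|$. Because $L_C=0$, Theorem \ref{properties_alg}(b) yields $d_C(x_n(t))\le K_5\mu_n+2\sqrt{\varepsilon_n}\to 0$ uniformly in $t$, and the coefficient $\lambda_n(t)/\mu_n$ stays bounded by a constant $M$ independent of $n,t$.

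Next I extract a convergent subsequence. For each fixed $t$, the sequence $(x_n(t))_n$ is bounded by Theorem \ref{properties_alg}(a)(iii); projecting each $x_n(t)$ onto $C$ produces a bounded sequence in $C$ which is relatively compact by the ball-compactness of $C$, and combined with $d_C(x_n(t))\to 0$ this forces $\{x_n(t):n\in\N\}$ to be relatively compact in $\H$. Equicontinuity of $(x_n)$ follows from the uniform bound $\|\dot x_n\|\le K_6$ in Theorem \ref{properties_alg}(c). Arzelà-Ascoli then delivers a subsequence (not relabeled) $(x_j)$ converging uniformly to a Lipschitz function $x$ with $x(t)\in C$ for all $t$; Dunford-Pettis yields $\dot x_j\rightharpoonup\dot x$ in $L^1([0,T];\H)$. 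Along a further subsequence, $x_j(\theta_j(t))$, $x_j(\delta_j(t))$ and $v_j(t)$ all converge to $x(t)$ for every $t\in[0,T]$. To pass to the limit, I apply Mazur's Lemma to produce an a.e.\ convergent sequence of convex combinations of $(\dot x_j)$ and invoke Lemma \ref{lemma_tech_1} pointwise with the set-valued map $\mathcal{G}(y,z):=-M\,\partial d_C(y)+M\mathbb{B}\cap F(t,z)$, which has nonempty closed convex values and is upper semicontinuous into $\H_w$ by the usc of $\partial d_C$ and of $F(t,\cdot)$ (hypothesis \ref{FixH1}). Since $\partial d_C(x(t))\subset N(C;x(t))$, this yields $\dot x(t)\in -N(C;x(t))+F(t,x(t))$ a.e.

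For the a.e.\ bound on $\|\dot x\|$, I exploit the fixed-set structure to sharpen Theorem \ref{properties_alg}(c). Since $x_k^n\in C$, the displacement $\tau_n(t)-x_k^n$ is controlled by a single integral of $f$, giving $d_C(\tau_n(t))\le (h(x_k^n)+\sqrt{\gamma})\mu_n$, hence
$$\|\dot x_n(t)\|\le 2\bigl(h(x_n(\delta_n(t)))+\sqrt{\gamma}\bigr)+\frac{\sqrt{\varepsilon_n}}{\mu_n}.$$
Using Lipschitz continuity of $h$, uniform convergence $x_n(\delta_n(t))\to x(t)$, the assumption $\sqrt{\varepsilon_n}/\mu_n\to 0$, and the weak lower semicontinuity of the $L^1$ norm applied to Mazur convex combinations, one recovers the pointwise bound $\|\dot x(t)\|\le 2(h(x(t))+\sqrt\gamma)$ a.e. I expect this last step to be the main technical obstacle: propagating a pointwise sharp bound through the weak $L^1$ limit requires care, and the cleanest route is to combine the a.e.\ convergence coming from Mazur's Lemma with a Fatou-type lower semicontinuity argument using the continuity of $h$.
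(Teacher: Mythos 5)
Your proposal is correct and follows the same skeleton as the paper's proof: Theorem \ref{properties_alg} with $L_C=0$, relative compactness of $\{x_n(t)\}$ via ball-compactness of $C$ and $d_C(x_n(t))\to 0$, Arzel\`a--Ascoli plus Dunford--Pettis, Mazur's Lemma, and Lemma \ref{lemma_tech_1} to pass to the limit in the discrete inclusion. Two of your steps diverge from the paper in ways worth recording. First, you discard Lemma \ref{upper-semicontinuity} and instead use that $\partial d_C$ is upper semicontinuous from $\H$ into $\H_w$ because $d_C$ is a fixed $1$-Lipschitz function; the paper still cites Lemma \ref{upper-semicontinuity} here, even though that lemma is stated for equi-uniformly subsmooth families and a merely closed fixed set need not be subsmooth, so your route is arguably the cleaner (and more honest) justification of hypothesis (iii) of Lemma \ref{lemma_tech_1} in this setting. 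Second, for the velocity bound the paper does not prove it a posteriori: it keeps the state-dependent coefficients $\alpha_n=\frac{4\sqrt{\epsilon_n}}{\mu_n}+h(x_n(\delta_n(t)))+\sqrt{\gamma}$ and $\beta_n$ inside the Mazur combinations and obtains the limiting inclusion $\dot x(t)\in -(h(x(t))+\sqrt{\gamma})\partial d_C(x(t))+(h(x(t))+\sqrt{\gamma})\mathbb{B}\cap F(t,x(t))$, from which $\Vert\dot x(t)\Vert\le 2(h(x(t))+\sqrt{\gamma})$ is immediate since $\partial d_C\subset\mathbb{B}$. Your alternative --- freezing a crude constant $M$ in the inclusion and then recovering the sharp bound from $\Vert\dot x_n(t)\Vert\le 2(h(x_n(\delta_n(t)))+\sqrt{\gamma})+\sqrt{\epsilon_n}/\mu_n$ via the a.e.\ convergence of the Mazur combinations and continuity of $h$ --- does work (convex combinations of the $\dot x_k$, $k\ge j$, inherit the bound $\sup_{k\ge j}\bigl(2(h(x_k(\delta_k(t)))+\sqrt{\gamma})+\sqrt{\epsilon_k}/\mu_k\bigr)$, which tends to $2(h(x(t))+\sqrt{\gamma})$), but it is an extra step the paper avoids; the step you flag as the main technical obstacle simply disappears if you let the coefficients of the limiting inclusion carry the bound for you.
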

\begin{proof} We are going to use the properties of Theorem \ref{properties_alg}, where now we have $L_C = 0$. First of all, from Theorem \ref{prop1_d} we have for all $n\in\N$ and $k\in \{0,1,\ldots,n-1\}$, there is $v_{k+1}^n\in C$ such that $\|v_{k+1}^n-x_{k+1}^n\|<2\sqrt{\epsilon_n}$ and for all $t\in ]t_k^n,t_{k+1}^n]$:
\begin{equation*}
    \dot{x}_n(t)\in -\frac{\lambda_n(t)}{\mu_n}\partial_P d_{C}(v_{k+1}^n) + f(t,x_n(\delta_n(t))) +\frac{3\sqrt{\epsilon_n}}{\mu_n}\mathbb{B}.
\end{equation*}
where $\lambda_n(t) = 4\sqrt{\epsilon_n} + (h(x(\delta_n(t)))+\sqrt{\gamma})\mu_n$. Defining $v_n(t):=v_{k+1}^n$ on $]t_k^n,t_{k+1}^n]$, we get that for all $n\in\N$ and a.e. $t\in [0,T]$
\begin{equation*}
    \begin{aligned}
        \dot{x}_n(t) &\in  -\frac{\lambda_n(t)}{\mu_n}\partial_P d_{C}(v_n(t)) + f(t,x_n(\delta_n(t))) +\frac{3\sqrt{\epsilon_n}}{\mu_n}\mathbb{B}\\
        &\in  -\frac{\lambda_n(t)}{\mu_n}\partial d_{C}(v_n(t))+ (h(t,x_n(\delta_n(t)))+\sqrt{\gamma})\mathbb{B}\cap F(t,x_n(\delta_n(t))) +\frac{3\sqrt{\epsilon_n}}{\mu_n}\mathbb{B}.   
    \end{aligned}
\end{equation*}
 Moreover, by Theorem \ref{prop1_b},  we have 
\begin{equation*}
d_C(x_n(t))\leq  K_5\mu_n+2\sqrt{\varepsilon_n} \textrm{ for all } t\in [0,T].
\end{equation*}
Next, fix $t\in [0,T]$ and define $K(t):=\{x_n(t) : n\in \mathbb{N}\}$. We claim that $K(t)$ is relatively compact. Indeed, let $x_m(t) \in K(t)$ and take $y_m(t)\in \operatorname{Proj}_C(x_m(t))$ (the projection exists due to the ball compactness of $C$ and the boundedness of $K(t)$). Moreover, according to the above inequality and Theorem \ref{prop1_a},
\begin{equation*}
\Vert y_n(t)\Vert  \leq d_C(x_n(t))+\Vert x_n(t)\Vert \leq   K_5\mu_n+2\sqrt{\varepsilon_n}+K_2,
\end{equation*}
which entails that $y_n(t)\in C\cap R\, \mathbb{B}$ for all $n\in\mathbb{N}$ for some $R>0$. Thus, by the ball-compactness of $C$, there exists a subsequence $(y_{m_k}(t))_{m_k}$ of $(y_m(t))_m$ converging to some $y(t)$ as $k\to +\infty$. Then, 
\begin{equation*}
\begin{aligned}
\Vert x_{m_k}(t)-y(t)\Vert &\leq d_{C}(x_{m_k}(t))+\Vert y_{m_k}(t)-y(t)\Vert \\
&\leq K_5\mu_{m_k}+2\sqrt{\varepsilon_{m_k}}+\Vert y_{m_k}(t)-y(t)\Vert,
\end{aligned}
\end{equation*}
which implies that $K(t)$ is relatively compact. Moreover, it is not difficult to see by Theorem \ref{prop1_c} that the set $K:=(x_n)$ is equicontinuous.  Therefore, by virtue of Theorem \ref{prop1_c}, Arzela-Ascoli's and Dunford-Pettis's Theorems, we obtain the existence of a Lipschitz function $x$ and a subsequence $(x_j)_j$ of $(x_n)_n$ such that
\begin{enumerate}[label=(\roman{*})]
\item $(x_j)$ converges uniformly to $x$ on $[0,T]$.
\item  $\dot{x}_j\rightharpoonup \dot{x}$ in $L^1\left([0,T];\H\right)$.
\item $x_j(\theta_j(t))\to x(t)$ for all $t\in [0,T]$.
\item $x_j(\delta_j(t))\to x(t)$ for all $t\in [0,T]$.
\item $v_j(t)\to x(t)$ for all $t\in [0,T]$.
\item $x(t)\in C$  for all $t\in [0,T]$.
\end{enumerate} 
By Mazur's Lemma, there is a sequence $(y_j)$ such that for all $j$, $y_j\in \text{co}(\dot{x}_k:k\geq j)$ and $(y_j)$ converges strongly to $\dot{x}$ in $L^1([0,T];\H)$. i.e., $$y_j(t)\in \text{co}\left( -\alpha_n\partial d_{C}(v_n(t)) + \beta_n\mathbb{B}\cap F(t,x_n(\delta_n(t))) +\frac{3\sqrt{\epsilon_n}}{\mu_n}\mathbb{B} :n\geq j\right),$$ 
where $\alpha_n:= \frac{4\sqrt{\epsilon_n}}{\mu_n}+h(t,x_n(\delta_n(t)))+\sqrt{\gamma}$ and $\beta_n:= \frac{4\sqrt{\epsilon_n}}{\mu_n}+h(t,x_n(\delta_n(t)))$. On the other hand, there exists $(y_{n_j})$ which converges to $\dot{x}$ almost everywhere in $[0,T]$. Then, using Lemma \ref{upper-semicontinuity}, Lemma \ref{lemma_tech_1} and \ref{FixH1}, we have 
$$\dot{x}(t)\in -(h(x(t))+\sqrt{\gamma})\partial d_{C}(x(t)) +(h(x(t))+\sqrt{\gamma})\mathbb{B}\cap F(t,x(t))  \text{ for a.e. } t\in [0,T].
$$
Finally, since $\partial d_{C}(x(t))\subset N(C;x(t))$ for all $t\in [0,T]$, we obtain that  $x$ is the solution of \eqref{Fixed-sweep-2}.
\end{proof}

\section{Numerical methods for approximate projections}

As stated before, in most cases, finding an explicit formula for the projection onto a closed set is not possible. Therefore, one must resort to numerical algorithms to obtain approximate projections. There are several papers discussing this issue for different notions of approximate projections (see, e.g., \cite{pmlr-v139-usmanova21a}).  These algorithms are called \emph{projection oracles} and  provide an approximate solution $\bar{z}\in \H$ to the following optimization problem:
\begin{equation*}\tag{$P_x$}\label{opt-P}
\min_{z\in C} \|x-z\|^2,
\end{equation*}
where $C$ is a given closed set and $x\in\H$. Whether the approximate solution $\bar{z}$ belongs to the set $C$ or not depends on the notion of approximate projection. In our case, to implement our algorithm, we need that $\bar{z}\in C$.  In this line, a well-known projection oracle fulfilling this property can be obtained via the celebrated Frank-Wolfe algorithm (see, e.g., \cite{frank1956algorithm, pmlr-v28-jaggi13}), where a linear sub-problem of \eqref{opt-P} is solved in each iteration. For several types of convex sets, this method has been successfully developed (see \cite{pmlr-v28-jaggi13, MR4275646, MR4314104}). Besides, in \cite{ding2018frank}, it was shown that an approximate solution of the linear sub-problem is enough to obtain a projection oracle. 

Another important approach to obtaining approximate projections is the use of the Frank-Wolfe algorithm with separation oracles (see \cite{dadush2022simple}). Roughly speaking, a separation oracle determines whether a given point belongs to a set and, in the negative case, provides a hyperplane separating the point from the set (see \cite{MR0936633} for more details). For particular sets, it is easy to get an explicit separation oracle (see \cite[p. 49]{MR0936633}). An important example is the case of a sublevel set: let $g\colon \H\to\R$ be a continuous convex function and $\lambda\in\R$.  Then $[g\leq \lambda] := \{x\in\H : g(x)\leq\lambda \}$ has a separation oracle described as follows: to verify that any point belongs to $[g\leq \lambda]$ is straightforward. When a point $x\in\H$ does not belong to $[g\leq \lambda]$, we can consider any $x^\ast\in \partial g(x)$.  Then,  for all $y\in [g\leq\lambda]$,
\begin{equation*}
    \langle x^\ast,x\rangle\geq g(x)-g(y)+\langle x^\ast,y\rangle> \langle x^\ast,y\rangle,
\end{equation*}
where we have used that $g(x)>\lambda\geq g(y)$. Hence, the above inequality shows the existence of the desired hyperplane, which provides a separation oracle for $[g\leq \lambda]$. Therefore, if $C$ is the sublevel set of some convex function, we can use the algorithm proposed in \cite{dadush2022simple} to get an approximate solution $\bar{z}\in \proj_S^\epsilon(x)$.  Moreover, the sublevel set enables us to consider the case 
$$C(t,x) := \bigcap_{i=1}^m\{x\in \H:g_i(t,x)\leq 0\}=\{x\in\H:g(t,x):=\max_{i=1,...,m}g_i(t,x)\leq 0\},$$
where for all $t\in [0,T]$, $g_i(t,\cdot)\colon \H\to \R$, $i=1,\ldots,m$ are convex functions. We refer to 
 \cite[Proposition 5.1]{MR4027814} for the proper assumptions on these functions to ensure the Lipschitz property of the map $t\tto C(t)$ holds \eqref{lipschitz_hausdorff}.

\section{Concluding remarks}
In this paper, we have developed an enhanced version of the catching-up algorithm for sweeping processes through an appropriate concept of approximate projections. We provide the proposed algorithm's convergence for three frameworks: prox-regular, subsmooth, and merely closed sets. Some insights into numerical procedures to obtain approximate projections were given mainly in the convex case. Finally, the convergence of our algorithm for other notions of approximate solutions will be explored in forthcoming works.


\bibliographystyle{spmpsci}      
\bibliography{bib}   


\end{document}